\documentclass[10pt]{amsart}
\usepackage{amsmath,amscd}
\usepackage{amsbsy}
\usepackage{amssymb}
\usepackage{amscd,amsthm}
\usepackage[all,cmtip]{xy}

\newtheorem{thm}{Theorem}\numberwithin{thm}{section}
\newtheorem{lem}[thm]{Lemma}
\newtheorem{prop}[thm]{Proposition}
\newtheorem{cor}[thm]{Corollary}
\newtheorem{exam}[thm]{Example}
\newtheorem{rema}[thm]{Remark}
\newtheorem{prob}[thm]{Problem}

\newtheorem{defi}[thm]{Definition}

\newtheorem*{thm2}{Theorem}

\newtheorem*{con2}{Conjecture}

\begin{document}
\begin{center}
\huge{Rational maps between varieties associated to central simple algebras}\\[1cm]
\end{center}
\begin{center}

\large{Sa$\mathrm{\check{s}}$a Novakovi$\mathrm{\acute{c}}$}\\[0,5cm]
{\small February 2016}\\[0,5cm]
\end{center}
{\small \textbf{Abstract}. 
In this paper we show that if two central simple $k$-algebras generate the same cyclic subgroup in $\mathrm{Br}(k)$, then there are rational maps between varieties associated to these algebras, such as Brauer--Severi varieties, norm hypersurfaces and symmetric powers. In some cases we even have rational embeddings. We also relate the obtained results to the Amitsur conjecture.
\begin{center}
\tableofcontents
\end{center}
\section{Introduction}
To central simple $k$-algebras one can associate several algebraic varieties. Brauer--Severi varieties are maybe the most prominent one. It is well-known that central simple algebras are in one-to-one correspondence with Brauer--Severi varieties over a field $k$ \cite{AR}. So it is natural to study the geometry of a Brauer--Severi variety in dependence on the algebraic structure of the corresponding algebra and vice versa. Amitsur \cite{AM} investigated so called generic splitting fields and proved that if two Brauer--Severi varieties $X$ and $Y$ are birational then the corresponding central simple algebras $A$ and $B$ generate the same cyclic subgroup in $\mathrm{Br}(k)$. So it was natural to ask if the other implication is true as well. And indeed, Amitsur proved that this is true for certain ground fields $k$. Consequently he conjectured that two Brauer--Severi varieties $X$ and $Y$ are birational if and only if the corresponding central simple algebras $A$ and $B$ generate the same cyclic subgroup in $\mathrm{Br}(k)$. This conjecture is nowadays called the Amitsur conjecture for Brauer--Severi varieties. Several results in favor of this conjecture are known (see \cite{AM}, \cite{KR}, \cite{KR1}, \cite{RO}, \cite{TR}). In view of this conjecture it is an important task to understand more closely the relation between the geometry of Brauer--Severi varieties and the structure of the corresponding central simple algebras in $\mathrm{Br}(k)$.

Let $D$ and $D'$ by division algebras of the same degree and $X$ and $Y$ the corresponding Brauer--Severi varieties. Furthermore, denote by $\mathcal{X}$ and $\mathcal{Y}$ the Brauer--Severi varieties corresponding to $M_n(D)$ and $M_n(D')$ for an integer $n>1$. Note that there are closed immersions $X\hookrightarrow \mathcal{X}$ and $Y\hookrightarrow \mathcal{Y}$. 

Now if $D$ and $D'$ generate the same subgroup in $\mathrm{Br}(k)$ one gets dominant rational maps $X\dashrightarrow Y$ and $Y\dashrightarrow X$ (see \cite{KO}). But in general, the closed immersions $X\hookrightarrow \mathcal{X}$ and $Y\hookrightarrow \mathcal{Y}$ do not induce rational embeddings for instance of $X$ into $\mathcal{Y}$. Moreover, the rational map $X\dashrightarrow Y$ is also far from being birational. In this context, our first main result will be the following.
\begin{thm2}(Theorem 5.3)
Let $\mathcal{X}$ and $\mathcal{Y}$ be two Brauer--Severi varieties corresponding to the central simple $k$-algebras $A=M_n(D)$ and $B=M_n(D')$ with $n>1$ arbitrary and assume $\mathrm{deg}(D)=\mathrm{deg}(D')$. Denote by $X$ and $Y$ the Brauer--Severi varieties corresponding to $D$ and $D'$. Then $A$ and $B$ generate the same cyclic subgroup in $\mathrm{Br}(k)$ if and only if there are rational embeddings $X\dashrightarrow \mathcal{Y}$ and $Y\dashrightarrow \mathcal{X}$.
\end{thm2}
By definition, the rational embedding $X\dashrightarrow \mathcal{Y}$ from above can be factored as $U\rightarrow Z\rightarrow \mathcal{Y}$, where the first arrow is an open and the second one a closed immersion. Here $U$ is an suitable open subset of $X$. Clearly, if $Z=Y$ then $X$ and $Y$ are birational.
We briefly discuss the case $Z\neq Y$ and consider the closed subscheme $Z\cap Y\subset Y$ (we show that $Z\cap Y$ can always assumed to be non-empty). In this case we relate the irreducible components of $Z\cap Y$ to the so called $AS$-bundles of $Y$ via the maps between the involved Grothendieck and Chow groups (Theorem 5.11). For details on $AS$-bundles on arbitrary proper $k$-schemes, and Brauer--Severi varieties in particular, we refer to \cite{N} where these bundles are introduced.

It is also possible to associate with a central simple algebra $A=M_n(D)$ the so called norm hypersurface $V(A)$ (see Section 6). Studying the function fields of this norm hypersurfaces, Saltman \cite{SA} proved a variant of the Amitsur conjecture, namely, that two central simple $k$-algebras $A=M_n(D)$ and $B=M_n(D')$ of the same degree generate the same cyclic subgroup in $\mathrm{Br}(k)$ if and only if $V(A)$ is birational to $V(B)$. In \cite{SA1} Saltman constructed so called rational embeddings (see Section 5) of the Brauer--Severi variety $\mathcal{X}$ corresponding to $A$ into the norm hypersurface $V(A)$. An overall strategy to tackle the Amitsur conjecture is the following: Suppose $A$ and $B$ generate the same cyclic subgroup so that $V(A)$ and $V(B)$ are birational. Now construct rational embeddings $\mathcal{X}\dashrightarrow V(A)$ and $\mathcal{Y}\dashrightarrow V(B)$ such that the birationality of the norm hypersurfaces induces a birational map between $\mathcal{X}$ and $\mathcal{Y}$. This idea was captured by Meth \cite{ME} in his thesis, where the set of rational embeddings is enlarged and some ideas are refined. Using Theorem 5.3 from above, in Section 6 of the present paper we prove the following theorem. 
\begin{thm2}(Theorem 6.4)
Let $A$ and $B$ central simple algebras of the same degree and $\mathcal{X}$ and $\mathcal{Y}$ the corresponding Brauer--Severi varieties. Denote by $X$ and $Y$ the minimal linear subvarieties. Then $A$ and $B$ generate the same cyclic subgroup in $\mathrm{Br}(k)$ if and only if there are rational embeddings $X\dashrightarrow V(B)$ and $Y\dashrightarrow V(A)$ such that the image of their domains lies in the smooth locus $V^+_B$ respectively $V^+_A$.
\end{thm2}
Note that it is also possible to get rational embeddings $X\dashrightarrow V(A)$ (see Proposition 6.3). The interesting thing is that from Theorem 6.4 we obtain rational embeddings of $X$ into the norm hypersurface $V(B)$.

A further variety which can be associated with a central simple algebra, and therefore with the corresponding Brauer--Severi variety $X$, is the symmetric power $S^m(X)$. These varieties are studied by Krashen and Saltman in \cite{KS}. Let $A$ and $B$ be central simple algebras of the same degree and $X$ and $Y$ the corresponding Brauer--Severi varieties. As a simple consequence of the results given in \cite{KS} we obtain that if $A$ and $B$ generate the same subgroup, then there exists always an integer $m<\mathrm{deg}(A)$ such that $S^m(X)$ is birational to $S^m(Y)$ (see Corollary 7.4). 

In conclusion, we can say that the norm hypersurfaces and certain symmetric powers related to central simple algebras $A$ and $B$ are birational, provided $A$ and $B$ generate the same subgroup. The same should be true for (generalized) Brauer--Severi varieties according to the Amitsur conjecture. Moreover, the results of the present paper show that if $A$ and $B$ generate the same subgroup one also has several rational maps, or even rational embeddings, between these varieties. From this point of view, it would be an interesting task to find further varieties related to a central simple algebra $A$ and to study their geometry in terms of the algebraic structure of $A$.\\

%{\small \textbf{Acknowledgement}. I would like to thank Daniel Krashen for helpful comments.\\

{\small \textbf{Conventions}. Throughout this work $k$ denotes an arbitrary ground field if not stated otherwise.

\section{Generalities on central simple algebras}
The main references for Brauer--Severi varieties and central simple algebras are \cite{AR}, \cite{GS} and \cite{SA1}. For the more general notions of Brauer--Severi schemes and Azumaya algebras we refer to \cite{GRO} and \cite{GRO1}.

A \emph{Brauer--Severi variety} of dimension $n$ is a scheme $X$ of finite type over $k$ such that $X\otimes_k L\simeq \mathbb{P}^n$ for a finite field extension $k\subset L$. A field extension $k\subset L$ for which $X\otimes_k L\simeq \mathbb{P}^n$ is called \emph{splitting field} of $X$. Clearly, the algebraic closure $\bar{k}$ is a splitting field for any Brauer--Severi variety. In fact, every Brauer--Severi variety always splits over a finite separable field extension of $k$ (see \cite{GS}, Corollary 5.1.4). By embedding the finite separable splitting field into its Galois closure, a Brauer--Severi variety therefore always splits over a finite Galois extension. It follows from descent theory that $X$ is projective, integral and smooth over $k$. 

Recall, a finite-dimensional $k$-algebra $A$ is called \emph{central simple} if it is an associative $k$-algebra that has no two-sided ideals other than $0$ and $A$ and if its center equals $k$. If the algebra $A$ is a division algebra it is called \emph{central division algebra}. Note that $A$ is a central simple $k$-algebra if and only if there is a finite field extension $k\subset L$, such that $A\otimes_k L \simeq M_n(L)$ (see \cite{GS}, Theorem 2.2.1). This is also equivalent to $A\otimes_k \bar{k}\simeq M_n(\bar{k})$. An extension $k\subset L$ such that $A\otimes_k L\simeq M_n(L)$ is called splitting field for $A$. 

The \emph{degree} of a central simple algebra $A$ is defined to be $\mathrm{deg}(A):=\sqrt{\mathrm{dim}_k A}$. It turns out that the study of central simple $k$-algebras can be reduced to the study of central division algebras. Indeed, according to the \emph{Wedderburn Theorem} (see \cite{GS}, Theorem 2.1.3), for any central simple $k$-algebra $A$ there is an unique integer $n>0$ and a division algebra $D$ such that $A\simeq M_n(D)$. The division algebra $D$ is also central and unique up to isomorphism. 

Now the degree of the unique central division algebra $D$ is called the \emph{index} of $A$ and is denoted by $\mathrm{ind}(A)$. The index of a central simple $k$-algebra $A$ is also the smallest among the degrees of finite separable field extensions that split $A$ (see \cite{GS}, Corollary 4.5.9). 

Moreover, two central simple $k$-algebras $A\simeq M_n(D)$ and $B\simeq M_m(D')$ are called \emph{equivalent} if $D\simeq D'$. Recall that the \emph{Brauer group} $\mathrm{Br}(k)$ of a field $k$ is the group whose elements are equivalence classes of central simple $k$-algebras, with addition given by the tensor product of algebras. It is an abelian group with inverse of a central simple algebra $A$ being $A^{op}$. The neutral element is the equivalence class of $k$. It is a fact that the Brauer group of any field is a torsion group. The order of a central simple $k$-algebra $A\in \mathrm{Br}(k)$ is called the \emph{period} of $A$ and is denoted by $\mathrm{per}(A)$. 

Denoting by $\mathrm{BS}_n(k)$ the set of all isomorphism classes of Brauer--Severi varieties of dimension $n$ and by $\mathrm{CSA}_{n+1}(k)$ the set of all isomorphism classes of central simple $k$-algebras of degree $n+1$, there is a canonical identification
\begin{center}
$\mathrm{CSA}_{n+1}(k)=\mathrm{BS}_n(k)$ 
\end{center}
via non-commutative Galois cohomology (see \cite{AR}, \cite{GS} for details). Hence any $n$-dimensional Brauer--Severi variety $X$ corresponds to a central simple $k$-algebra of degree $n+1$. In view of the one-to-one correspondence between Brauer--Severi varieties and central simple algebras it is also common to speak about the period or index of a Brauer--severi variety $X$, meaning the period or index of the corresponding central simple $k$-algebra. 
We say a Brauer--Severi variety is \emph{minimal} if it corresponds to a central division algebra. If $\mathcal{X}$ is the Brauer--Severi variety corresponding to $A=M_n(D)$ and $X$ the one corresponding to $D$, then $X$ can always be embedded into $\mathcal{X}$ as a closed subvariety (see \cite{GS}, Proposition 5.3.2). Any closed subvariety $X\subset \mathcal{X}$ such that $X\otimes_k L\simeq \mathbb{P}^s$ for some $s\leq \mathrm{dim}(\mathcal{X})$ is called \emph{linear subvariety} and the corresponding central simple algebra represents the same element in $\mathrm{Br}(k)$ as the central simple algebra corresponding to $\mathcal{X}$. The linear subvariety of smallest possible dimension is isomorphic to the minimal Brauer--Severi variety.  

To a central simple algebra $A$ of degree $n$ one can also associate the \emph{generalized} Brauer--Severi variety. It is defined as the projective subvariety of $\mathrm{Grass}_k(nr,A)$ parameterizing the collection of rank $nr$ right ideals of $A$. Here $1\leq r\leq n-1$. If $X$ is the Brauer--Severi variety corresponding to $A$, the generalized Brauer--Severi variety is denoted by $X_r$. By definition $X_1=X$. It can be shown that $X_r$ becomes a Grassmannian after base change to some finte Galois field extension of $k$. For details we refer to \cite{BL}.   
\section{Amitsur conjecture for Brauer--Severi varieties}

Recall the following theorem proved in \cite{AM} (see also \cite{GS}, Theorem 5.4.1):
\begin{thm}
Let $X$ be a Brauer--Severi variety corresponding to the central simple $k$-algebra $A$. Denote by $F(X)$ the function field of $X$. Then the kernel of the restriction map $\mathrm{Br}(k)\rightarrow \mathrm{Br}(F(X))$, $B\mapsto B\otimes_k F(X)$, is a cyclic group generated by $A$.
\end{thm}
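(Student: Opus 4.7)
The plan is to prove the two inclusions $\langle [A]\rangle \subseteq \ker$ and $\ker \subseteq \langle [A]\rangle$ separately.

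For the first, the key observation is that the generic point of $X$ defines an $F(X)$-rational point on $X\otimes_k F(X)$. Since a Brauer--Severi variety acquires a rational point over a field $K$ if and only if the corresponding central simple algebra splits over $K$ (see \cite{GS}, Theorem 5.1.3), $A\otimes_k F(X)$ is split in $\mathrm{Br}(F(X))$. Because restriction of Brauer classes is a group homomorphism, every power of $[A]$ lies in the kernel, establishing the first inclusion.

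For the reverse inclusion I would invoke the Hochschild--Serre spectral sequence
$$
H^p\bigl(G_k, H^q(X_{\bar k},\mathbb{G}_m)\bigr)\;\Longrightarrow\; H^{p+q}(X,\mathbb{G}_m),
$$
where $G_k=\mathrm{Gal}(\bar k/k)$. As $X$ is geometrically connected, $H^0(X_{\bar k},\mathbb{G}_m)=\bar k^*$, and Hilbert 90 kills $H^1(G_k,\bar k^*)$. Thus the five-term sequence reads
$$
0\longrightarrow \mathrm{Pic}(X)\longrightarrow \mathrm{Pic}(X_{\bar k})^{G_k}\stackrel{\delta}{\longrightarrow}\mathrm{Br}(k)\longrightarrow \mathrm{Br}(X).
$$
Since $X_{\bar k}\simeq \mathbb{P}^n_{\bar k}$, we have $\mathrm{Pic}(X_{\bar k})\simeq \mathbb{Z}\cdot\mathcal{O}(1)$, and because the hyperplane class is the unique ample generator, the $G_k$-action is trivial so $\mathrm{Pic}(X_{\bar k})^{G_k}=\mathbb{Z}$. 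The standard dictionary between Brauer--Severi varieties and their Brauer classes gives $\delta(\mathcal{O}(1))=[A]$, so the image of $\delta$ is precisely $\langle [A]\rangle$. Therefore $\ker(\mathrm{Br}(k)\to \mathrm{Br}(X))=\langle [A]\rangle$.

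To pass from $\mathrm{Br}(X)$ to $\mathrm{Br}(F(X))$, I would apply the Auslander--Goldman purity theorem: for a regular integral scheme $X$, the natural map $\mathrm{Br}(X)\to \mathrm{Br}(F(X))$ is injective. Composing with the previous step yields
$$
\ker\bigl(\mathrm{Br}(k)\to \mathrm{Br}(F(X))\bigr)\;=\;\ker\bigl(\mathrm{Br}(k)\to \mathrm{Br}(X)\bigr)\;=\;\langle [A]\rangle,
$$
proving the theorem. The main obstacle I expect is the identification $\delta(\mathcal{O}(1))=[A]$: while conceptually natural, it requires either an explicit cocycle computation comparing $\delta$ with the class of $X$ in $H^1(G_k,\mathrm{PGL}_{n+1})$ and the connecting homomorphism $H^1(G_k,\mathrm{PGL}_{n+1})\to H^2(G_k,\bar k^*)$, or an appeal to the description of $X$ as a twisted form of $\mathbb{P}^n$. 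In a concise write-up I would cite this identification from \cite{GS} rather than reprove it here.
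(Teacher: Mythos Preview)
Your argument is correct, but there is nothing to compare it to: the paper does not prove this statement. It is stated as Theorem~3.1 and immediately attributed to Amitsur \cite{AM}, with the reader referred to \cite{GS}, Theorem~5.4.1 for a proof. Your sketch is essentially the modern proof one finds in \cite{GS}: the first inclusion via the generic point, the second via the low-degree exact sequence from Hochschild--Serre combined with the identification $\delta(\mathcal{O}(1))=[A]$, and the passage from $\mathrm{Br}(X)$ to $\mathrm{Br}(F(X))$ via injectivity for regular schemes. Amitsur's original 1955 argument \cite{AM} predates this machinery and proceeds by a direct analysis of generic splitting fields and index reduction, so in that sense your route is different from the historical one, but it coincides with the reference the paper actually points to.

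One small caution: you correctly flag the identification $\delta(\mathcal{O}(1))=[A]$ as the step requiring care. In \cite{GS} this is handled (see the discussion around their Theorem~5.4.10 and Lemma~5.4.11), so citing it is appropriate; just be aware that the sign and the choice of $A$ versus $A^{op}$ depend on conventions, though this is irrelevant for the conclusion about the cyclic subgroup generated.
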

This theorem has the following consequence.
\begin{cor}
Let $X$ and $Y$ be Brauer--Severi varieties and $A$ and $B$ the corresponding central simple $k$-algebras. If $X$ and $Y$ are birational, then $A$ and $B$ generate the same subgroup in $\mathrm{Br}(k)$. 
\end{cor}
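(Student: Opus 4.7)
The plan is to deduce this directly from Theorem 3.1 by comparing the kernels of the Brauer group restriction maps for $X$ and $Y$.

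First I would use the hypothesis that $X$ and $Y$ are birational to obtain an isomorphism of function fields $F(X)\cong F(Y)$ over $k$. This isomorphism identifies the two restriction homomorphisms $\mathrm{Br}(k)\to\mathrm{Br}(F(X))$ and $\mathrm{Br}(k)\to\mathrm{Br}(F(Y))$, so in particular it identifies their kernels as subgroups of $\mathrm{Br}(k)$.

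Next I would apply Theorem 3.1 twice: once to $X$, which tells me that the kernel of $\mathrm{Br}(k)\to\mathrm{Br}(F(X))$ is the cyclic subgroup $\langle A\rangle$ generated by the class of $A$; and once to $Y$, which says that the kernel of $\mathrm{Br}(k)\to\mathrm{Br}(F(Y))$ is the cyclic subgroup $\langle B\rangle$. Combining this with the previous step yields $\langle A\rangle=\langle B\rangle$, which is the desired conclusion.

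The proof is essentially a one-line consequence of Theorem 3.1, so there is no real obstacle; the only thing worth being careful about is the fact that the restriction maps are intrinsic to the function fields of $X$ and $Y$ as $k$-algebras, so the isomorphism $F(X)\cong F(Y)$ coming from the birational map (which is automatically over $k$) really does identify the two kernels as subgroups of the same group $\mathrm{Br}(k)$.
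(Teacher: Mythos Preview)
Your proposal is correct and follows exactly the approach the paper intends: the corollary is stated immediately after Theorem 3.1 with no explicit proof, precisely because it is the one-line deduction you give (birationality gives $F(X)\cong F(Y)$ over $k$, hence the two restriction kernels coincide, and by Theorem 3.1 these are $\langle A\rangle$ and $\langle B\rangle$). There is nothing to add or correct.
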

%\begin{proof}
%Since $X$ and $Y$ are birational we have $F(X)\simeq F(Y)$. The rest follows directly from the last theorem.
%\end{proof}
Amitsur \cite {AM} asked if the other implication of the corollary holds and formulated the following conjecture, referred to as the \emph{Amitsur conjecture} for Brauer--Severi varieties.
\begin{con2}
\textnormal{Let $X$ and $Y$ be Brauer--Severi varieties and $A$ and $B$ the corresponding central simple $k$-algebras. Assume $\mathrm{deg}(A)=\mathrm{deg}(B)$. If $A$ and $B$ generate the same cyclic subgroup of $\mathrm{Br}(k)$, then $X$ and $Y$ are birational.}
\end{con2}
Note that a weaker result is quite easy to prove. Recall, $X$ and $Y$ are called \emph{stably birational} if $X\times_k\mathbb{P}^n$ is birational to $Y\times_k\mathbb{P}^n$. Now if $A$ and $B$ generate the same cyclic subgroup of $\mathrm{Br}(k)$, then $A\otimes_k F(Y)$ and $B\otimes_k F(Y)$ generate the same subgroup in $\mathrm{Br}(F(Y))$. But $B\otimes_k F(Y)$ corresponds to the Brauer--Severi variety $Y\otimes_k F(Y)$ which has a $F(Y)$-rational point coming from the generic point. Hence $Y\otimes_k F(Y)\simeq \mathbb{P}^n$ and therefore the subgroup generated by $B\otimes_k F(Y)$ is trivial. Since $A\otimes_k F(Y)$ generates the same subgroup as $B\otimes_k F(Y)$, it has to be trivial, too. But this implies $X\otimes_k F(Y)\simeq \mathbb{P}^n\otimes_k F(Y)$. In particular, both schemes have the same function field $F(X\times_k Y)$. Thus $X\times_k Y$ is birational to $\mathbb{P}^n\times_k Y$ and by changing the role of $X$ and $Y$ we obtain (see \cite{SA1}, Lemma 13.20.).
\begin{prop}
Let $X$ and $Y$ be two Brauer--Severi varieties of the same dimension and $A$ and $B$ the corresponding central simple $k$-algebras. Then $A$ and $B$ generate the same cyclic subgroup of $\mathrm{Br}(k)$ if and only if $X$ is stably birational to $Y$. 
\end{prop}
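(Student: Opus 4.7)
The plan is to prove the two implications separately. Most of the forward direction is already sketched in the paragraph preceding the proposition, so I only need to assemble it; the reverse direction requires combining Amitsur's theorem (Theorem 3.1) with a standard fact about Brauer groups under purely transcendental extensions.

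For the forward direction, assume $\langle A\rangle=\langle B\rangle$ in $\mathrm{Br}(k)$. Base change to $F(Y)$: because the generic point of $Y$ yields an $F(Y)$-rational point of $Y\otimes_kF(Y)$, and a Brauer--Severi variety with a rational point is a projective space, we have $Y\otimes_kF(Y)\simeq\mathbb{P}^n_{F(Y)}$, so $B\otimes_kF(Y)$ is trivial in $\mathrm{Br}(F(Y))$. Hence $\langle A\otimes_kF(Y)\rangle=\langle B\otimes_kF(Y)\rangle$ is trivial, so $A\otimes_kF(Y)$ splits, which forces $X\otimes_kF(Y)\simeq\mathbb{P}^n_{F(Y)}$. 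Reading off function fields, $X\times_k Y$ is birational to $\mathbb{P}^n\times_k Y$. Exchanging the roles of $X$ and $Y$ gives that $X\times_k Y$ is birational to $X\times_k\mathbb{P}^n$. Combining these two birationalities yields $X\times_k\mathbb{P}^n$ birational to $Y\times_k\mathbb{P}^n$, i.e.\ $X$ and $Y$ are stably birational.

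For the reverse direction, assume $X\times_k\mathbb{P}^n$ is birational to $Y\times_k\mathbb{P}^n$. Then the function fields $F(X\times_k\mathbb{P}^n)\cong F(X)(t_1,\dots,t_n)$ and $F(Y\times_k\mathbb{P}^n)\cong F(Y)(t_1,\dots,t_n)$ agree. The standard fact that $\mathrm{Br}(F)\hookrightarrow\mathrm{Br}(F(t))$ is injective for any field $F$ (which one iterates $n$ times) implies that the kernel of $\mathrm{Br}(k)\to\mathrm{Br}(F(X)(t_1,\dots,t_n))$ coincides with the kernel of $\mathrm{Br}(k)\to\mathrm{Br}(F(X))$; by Theorem~3.1 the latter is the cyclic subgroup $\langle A\rangle$. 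The same argument yields $\langle B\rangle$ for the $Y$-side. Since the two kernels are computed from isomorphic function fields, they must coincide, so $\langle A\rangle=\langle B\rangle$.

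The proof is largely a bookkeeping exercise once Amitsur's theorem is in hand. The only nontrivial external input is the injectivity of $\mathrm{Br}(F)\hookrightarrow\mathrm{Br}(F(t))$, which is standard (e.g.\ via residues or via the fact that $\mathbb{P}^1_F$ has an $F$-rational point so any Brauer class splitting over $F(t)$ already splits over $F$). This is the main potential obstacle in the sense that it is the one step that cannot be extracted verbatim from the paragraph preceding the proposition. With that in place, both implications follow directly, and the statement is equivalent to the equality of Amitsur kernels along birational invariance of function fields up to $\mathbb{P}^n$-factors.
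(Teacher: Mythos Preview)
Your proposal is correct. The forward direction is precisely the argument the paper spells out in the paragraph preceding the proposition, so there is nothing to add there.

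For the reverse direction the paper does not give a proof but simply refers to \cite{SA1}, Lemma~13.20. Your argument---factor the restriction $\mathrm{Br}(k)\to\mathrm{Br}\bigl(F(X)(t_1,\dots,t_n)\bigr)$ through $\mathrm{Br}(F(X))$, use the injectivity of $\mathrm{Br}(F)\hookrightarrow\mathrm{Br}(F(t))$ iterated $n$ times to identify the kernel with $\ker\bigl(\mathrm{Br}(k)\to\mathrm{Br}(F(X))\bigr)=\langle A\rangle$ via Theorem~3.1, and then compare with the $Y$-side---is the standard way to unpack that citation and is exactly what one finds in Saltman's notes. So you have supplied the details the paper omitted, by essentially the same route the cited reference takes; the only external ingredient beyond Theorem~3.1 is indeed the injectivity of the Brauer group under purely transcendental extensions, which you correctly identify and justify.
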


\begin{rema}
\textnormal{Note that one implication of Proposition 3.3 also holds for generalized Brauer--Severi varieties. So it is plausible to ask if two generalized Brauer--Severi varieties of the same dimension are birational, provided the corresponding central simple algebras generate the same cyclic subgroup (see \cite{KR1}, Question 1.2)}.
\end{rema}
The Amitsur conjecture seems fascinating as it involves connections between the algebraic structure of $A$ and the geometry of $X$. For details in favor of the Amitsur conjecture we refer to \cite{AM}, \cite{KR}, \cite{KR1}, \cite{RO} and \cite{TR}.

%\item [\bf (1)] The Amitsur conjecture holds if the Brauer--Severi variety $X$ has a cyclic splitting field \cite{AM}. This is always true if $k$ is a local or global field.
%\item [\bf (2)] Let $A$ be a central simple $k$-algebra and $X$ the corresponding Brauer--Severi variety. The conjecture holds if $\mathrm{ind}(A)<\mathrm{deg}(A)$  \cite{RO}.
%\item [\bf (3)] Let $A$ and $B$ be central simple $k$-algebras and $X$ and $Y$ the corresponding Brauer--Severi varieties. The conjecture holds if $[A]=-[B]$ in $\mathrm{Br}(k)$ \cite{RO}.
%\item [\bf (4)] Let $A$ and $B$ be central simple $k$-algebras and $X$ and $Y$ the corresponding Brauer--Severi varieties. The conjecture holds if $[A]=2[B]$ in $\mathrm{Br}(k)$ \cite{TR}.
%\item [\bf (5)] Let $A$ be a central simple $k$-algebra and $X$ the corresponding Brauer--Severi variety. If $\mathrm{ind}(A)=2^i\cdot \prod p^{n_i}_i$ for certain $i$ and $n_i$, then the Amitsur conjecture holds \cite{KR1}.
%\item [\bf (6)] Let $A$ be a central simple $k$-algebra and $X$ the corresponding Brauer--Severi variety. If $A$ is of odd degree $n$ and has a dihedral splitting field of degree $2n$, then the Amitsur conjecture holds \cite{KR}.
%\end{itemize}
Moreover, Koll\'ar \cite{KO} and Hogadi \cite{HO} considered products of conics (Brauer--Severi varieties of dimension one) respectively products of Brauer--Severi surfaces and proved the following: Let $P_i$ and $Q_j$ be finite collections of conics (resp. Brauer--Severi surfaces) and suppose the subgroup in $\mathrm{Br}(k)$ generated by all $P_i$ equals the subgroup generated by all $Q_j$, then $\prod_i{P_i}$ is birational to $\prod_j{Q_j}$. So it is also plausible and natural to extend the Amitsur conjecture to such products. 
  
\section{Automorphisms of Brauer--Severi varieties}
Below we prove a theorem concerning automorphisms of Brauer--Severi varieties. It will be needed frequently in the next sections.\\ 

Without loss of generality, we assume the field $k$ to be infinite. This is no restriction as for finite fields the Brauer group is trivial and hence $\mathbb{P}^n$ are the only Brauer--Severi varieties. For trivial reasons, we not need to consider this case. We fist prove the following lemma.
\begin{lem}
Let $k\subset E$ be a finite Galois extension. Given a $G_{E/k}:=\mathrm{Gal}(E|k)$ set $P$ of ${n+1}$ distinct points in $\mathbb{P}^n_E$ one can find a set $Q$ of $n+1$ points, where no $n$ points lie in a hyperplane, such that $P$ and $Q$ are isomorphic as $G_{E/k}$ sets.
\end{lem}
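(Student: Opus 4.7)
The plan is to use the standard equivalence of categories between finite $G_{E/k}$-sets and finite \'etale $k$-algebras, together with an explicit Vandermonde construction. Decompose $P = O_1 \sqcup \cdots \sqcup O_r$ into $G_{E/k}$-orbits, choose representatives $p_i \in O_i$ with stabilizers $H_i \leq G_{E/k}$, and let $A := \prod_{i=1}^{r} E^{H_i}$ be the associated \'etale $k$-algebra, of dimension $\sum_i [G_{E/k} : H_i] = n+1$.

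First I would invoke the primitive element theorem for \'etale algebras over the infinite field $k$ to obtain $\alpha \in A$ with $k[\alpha] = A$. When two factors $E^{H_i}$ happen to be isomorphic as abstract extensions of $k$, one must arrange that the chosen generators have distinct minimal polynomials over $k$; this is possible because each $E^{H_i}$ is infinite (it contains $k$) and any single irreducible polynomial has only finitely many roots in $E^{H_i}$. Let $f(T) \in k[T]$ be the minimal polynomial of $\alpha$, which is separable of degree $n+1$. Under the identification $A \cong k[T]/(f)$, one has
\begin{equation*}
P \;\cong\; \mathrm{Hom}_{k\text{-}\mathrm{alg}}(A,E) \;\cong\; \{\alpha_1, \ldots, \alpha_{n+1}\} \subset E,
\end{equation*}
the set of roots of $f$ in $E$, with its natural $G_{E/k}$-action. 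This identification of $G_{E/k}$-sets is the key to matching the Galois structures.

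Next I would set
\begin{equation*}
Q := \bigl\{[1 : \alpha_i : \alpha_i^2 : \cdots : \alpha_i^n] \in \mathbb{P}^n_E \ \bigm|\ i = 1, \ldots, n+1 \bigr\}.
\end{equation*}
Since Galois acts on $Q$ via its action on the roots, one has $Q \cong P$ as $G_{E/k}$-sets. For any $n$ of the $n+1$ points of $Q$, the corresponding $n \times (n+1)$ matrix of homogeneous coordinates contains a maximal Vandermonde minor $\prod_{j < \ell}(\alpha_{i_\ell} - \alpha_{i_j})$, which is nonzero by separability of $f$. Hence every $n$ of the $n+1$ points of $Q$ are linearly independent, so no $n$ of them lie on a common hyperplane.

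I expect the main obstacle to be the primitive element step: one must exploit the infinitude of $k$ (hence of each $E^{H_i}$) to choose generators of the factors $E^{H_i}$ with pairwise coprime minimal polynomials, and one must verify that the resulting identification of $P$ with the roots of $f$ really is $G_{E/k}$-equivariant and not merely a bijection of underlying sets. Both issues reduce to the functoriality of $\mathrm{Hom}_{k\text{-}\mathrm{alg}}(-, E)$ in the anti-equivalence with $G_{E/k}$-sets.
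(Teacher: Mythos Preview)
Your approach is correct and coincides with the paper's: both place the desired $G_{E/k}$-set on the rational normal curve via $\alpha_i \mapsto [1:\alpha_i:\cdots:\alpha_i^n]$ and invoke the Vandermonde determinant for general position. The only difference is that the paper simply asserts one can find $n+1$ distinct points $\{\alpha_1,\dots,\alpha_{n+1}\}\subset\mathbb{A}^1(E)$ carrying the prescribed $G_{E/k}$-action, whereas you supply the justification through the \'etale-algebra/primitive-element argument; your version is thus a more detailed execution of the same idea rather than a different route.
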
 
\begin{proof}
We can find $n+1$ distinct points $\{\alpha_1,...,\alpha_{n+1}\}\subset \mathbb{A}^1(E)$ invariant under $G_{E/k}$ action. Now consider the points $\beta_i=(1:\alpha_i:\alpha^2_i:...:\alpha^n_i)$ in $\mathbb{P}^n_E$ and set $Q=\{\beta_1,...,\beta_{n+1}\}$. As all $\alpha_i$ are distinct, the determinant $\prod_{1\leq i<j\leq n+1}{(\alpha_j-\alpha_i)}$ of the Vandermond matrix 
\begin{center}
$
\begin{pmatrix}
1& 1& \dots	 & 1      \\
\alpha_1	&  \alpha_2	& \dots  & \alpha_{n+1} 	  \\
\vdots	& \vdots& \ddots & \vdots \\
\alpha^n_1 	& \alpha^n_2& \dots	 & \alpha^n_{n+1}
\end{pmatrix}^T
$
\end{center} is non-zero and hence no $n$ points of $Q=\{\beta_1,...,\beta_{n+1}\}$ lie in a hyperplane, i.e $Q$ is non-collinear. Note that the $G_{E/k}$-action on the $\beta_i$ is the same as the action on the $\alpha_i$. 
\end{proof}

\begin{thm}
Let $X$ be a Brauer--Severi variety over $k$ of index $d$. Suppose $x_0, x_1\in X$ are closed points with $k$-algebra isomorphism $k(x_0)\simeq k(x_1)$. Suppose furthermore $k\subset k(x_0)$ is a separable extension of degree $d$. Then there is an automorphism $\phi\in\mathrm{Aut}_k(X)$ with $\phi(x_0)=x_1$.
\end{thm}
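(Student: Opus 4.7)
The plan is to reduce, via Galois descent, to a matching problem between two finite configurations in a projective space over a common splitting field. Since $k\subset k(x_0)$ is separable of degree $d=\mathrm{ind}(X)$, the point $x_0$ provides a $k(x_0)$-rational point of $X_{k(x_0)}$, so $k(x_0)$ splits $X$. Let $E/k$ be the Galois closure of $k(x_0)$; then $E$ splits $X$ and contains a copy of $k(x_1)\simeq k(x_0)$. Writing $G=\mathrm{Gal}(E|k)$, we have $X_E\simeq\mathbb{P}^n_E$, and the closed points $x_0,x_1$ correspond to $G$-orbits $\bar P_0,\bar P_1\subset\mathbb{P}^n_E$ of cardinality $d$, together with a canonical identification $\bar P_0\simeq\bar P_1$ as $G$-sets coming from the $k$-algebra isomorphism of the residue fields. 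By Galois descent for $X$, elements of $\mathrm{Aut}_k(X)$ correspond to $G$-equivariant automorphisms of $X_E$, so it suffices to construct such an automorphism carrying $\bar P_0$ to $\bar P_1$ through the chosen bijection.

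I would next study the linear spans $L_i=\langle\bar P_i\rangle\subset\mathbb{P}^n_E$. Being the span of a $G$-stable finite set, each $L_i$ is itself $G$-stable and descends to a linear subvariety $Y_i\subset X$ defined over $k$; its dimension is at most $d-1$ (since $\bar P_i$ has $d$ points) and at least $\mathrm{ind}(X)-1=d-1$ by minimality of the index, so $\dim L_i=d-1$ and both $Y_i$ are isomorphic to the minimal Brauer--Severi variety of $X$. Inside each $L_i\simeq\mathbb{P}^{d-1}_E$, Lemma 4.1 (applied with $n$ replaced by $d-1$) allows one to replace $\bar P_i$ by a $G$-isomorphic Vandermonde configuration $\{(1:\alpha_j^{(i)}:\cdots:(\alpha_j^{(i)})^{d-1})\}_{j=1}^d$ determined by a $G$-invariant tuple of distinct scalars $\alpha_j^{(i)}\in\mathbb{A}^1(E)$.

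With the $\alpha_j^{(0)}$ and $\alpha_j^{(1)}$ aligned via the chosen $G$-set identification, I would produce the canonical $G$-equivariant projective isomorphism $L_0\to L_1$ matching the Vandermonde points pairwise. Because these points form a projective frame of $\mathbb{P}^{d-1}_E$, the isomorphism is uniquely pinned down, and the symmetry of the Vandermonde parametrization in the $\alpha$'s guarantees its $G$-equivariance. It remains to extend this matching to a $G$-equivariant automorphism of the ambient $\mathbb{P}^n_E$: I would lift to $G$-equivariant linear data, using that $V_0,V_1\subset E^{n+1}$ (with $L_i=\mathbb{P}(V_i)$) are Galois-stable subspaces of the same dimension sitting inside the same twisted descent datum for $X$, and extend the chosen isomorphism $V_0\simeq V_1$ by a compatible identification of quotients.

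The main obstacle I foresee is this last $G$-equivariant extension step, which amounts to comparing the short exact sequences of $G$-modules $0\to V_i\to E^{n+1}\to E^{n+1}/V_i\to 0$ and matching them as extensions. A clean way to circumvent this might be first to reduce to the case $Y_0=Y_1$ by invoking transitivity of $\mathrm{Aut}_k(X)$ on minimal linear subvarieties (via Skolem--Noether applied to the underlying central simple algebra $A$), and then handle the remaining matching inside the fixed minimal Brauer--Severi subvariety, where the Vandermonde rigidity supplied by Lemma 4.1 delivers the desired equivariant automorphism directly.
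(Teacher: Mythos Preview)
Your proposal correctly sets up the descent framework but has a genuine gap at the step where you assert $G$-equivariance of the matching isomorphism $L_0\to L_1$. After choosing isomorphisms of each $L_i$ with a standard $\mathbb{P}^{d-1}_E$ sending $\bar P_i$ to a common Vandermonde frame, the composite $L_0\to L_1$ is indeed uniquely determined $E$-linearly, but nothing forces it to intertwine the \emph{twisted} Galois actions that $L_0,L_1$ inherit as subvarieties of $X_E$. Lemma~4.1 only manufactures a configuration in $\mathbb{P}^{d-1}_E$ with the correct abstract $G$-set structure under the \emph{standard} Galois action on coordinates; a semilinear $G$-action on $\mathbb{P}^{d-1}_E$ permuting a fixed projective frame in a prescribed way is far from unique (the ambiguity is precisely a cocycle with values in the diagonal torus), so ``Vandermonde rigidity'' cannot by itself deliver equivariance. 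Your fallback via Skolem--Noether is circular: after arranging $Y_0=Y_1=:Y$ you still need an automorphism of the minimal Brauer--Severi variety $Y$ carrying $x_0$ to $x_1$, which is exactly the theorem with $X$ replaced by $Y$, and the same equivariance obstruction reappears there.

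The paper resolves precisely this point by a cohomological argument absent from your sketch. One first enlarges the two $d$-point orbits to $G$-stable $(n{+}1)$-tuples and maps both, via separately chosen isomorphisms $\psi,\phi\colon X_E\to\mathbb{P}^n_E$, onto a single Vandermonde frame $R$ in general position. The associated cocycles $\eta_\sigma=\psi({}^\sigma\psi^{-1})$ and $\zeta_\sigma=\phi({}^\sigma\phi^{-1})$ both represent $X$ in $H^1(G,\mathrm{PGL}_{n+1}(E))$ and, by construction, take values in the stabilizer $T$ of $R$. The crucial step is the injectivity of $H^1(G,T)\to H^1(G,\mathrm{PGL}_{n+1}(E))$, proved by lifting $T$ to the torus $U\subset\mathrm{GL}_{n+1}(E)$ of matrices diagonalized by the $R_i$ and invoking a Hilbert~90--type vanishing $H^1(G,U)=0$. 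Only then are $\eta_\sigma$ and $\zeta_\sigma$ cohomologous \emph{inside} $T$, yielding $N\in T$ with $\psi^{-1}N\phi$ Galois-invariant and hence descending to the desired $k$-automorphism of $X$. Your outline needs an ingredient of this kind to close the gap.
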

\begin{proof}
By assumption $L:=k(x_0)$ is a splitting field for $X$. Now denote by $E$ the Galois closure of $L$. As $x_0$ and $x_1$ are defined over $L$, they are also defined over $E$. Since $E$ is a splitting field for $X$, we have an isomorphism $\psi\colon X\otimes_k E\rightarrow \mathbb{P}^{n}_E$. By assumption, the points $x_0$ and $x_1$ split over $E$ as two sets of $d$ distinct points $P=\{\alpha_1,...,\alpha_d\}$ and $Q=\{\beta_1,...,\beta_d\}$ in $\mathbb{P}^n_E$. Note that the points $\alpha_1,...,\alpha_d$ have to be distinct, since $d$ is the smallest among the degrees of separable splitting fields for $X$. The same holds for $Q$. Since $d$ divides $n+1$, it is easy to verify that one can enlarge the sets $P$ and $Q$ to sets of $n+1$ distinct points $P'=\{\alpha_1,...,\alpha_d,\alpha_{d+1},...,\alpha_{n+1}\}$ and $Q'=\{\beta_1,...,\beta_d,\beta_{d+1},...,\beta_{n+1}\}$ in $\mathbb{P}^n_E$, with $P'$ and $Q'$ being $G_{E/k}$-orbits. 

As $\mathrm{Aut}_E(\mathbb{P}^n_E)$ acts transitively on sets of $n+1$ points in general position, we apply Lemma 4.1 to the set $P'$ and obtain a set $R=\{R_1,...,R_{n+1}\}$ of non-colinear points with the same $G_{E/k}$-action as the one on $P'$. So we set $\psi(\alpha_i)=R_i$ for $i=1,2,...,n+1$. In the same way we can construct an isomorphism $\phi:X\otimes_k E\rightarrow \mathbb{P}^n_E$ with $\phi(\beta_i)=R_i$ for $i=1,2,...,n+1$. Now we get two cocycles $\eta_{\sigma}=\psi(^{\sigma}\psi^{-1})$ and $\zeta_{\sigma}=\phi(^{\sigma}\phi^{-1})$ and observe that they are cohomologous in $H^1(G_{E/k},\mathrm{PGL}_n(E))$, since both correspond to the same Brauer--Severi variety. By the construction of $\phi$ and $\psi$, and the fact that all considered points have the same $G_{E/k}$-action, we see that $\eta_{\sigma}$ and $\zeta_{\sigma}$ can be interpreted as cocycles in $Z^1(G_{E/k},T)$, where $T$ denotes the set
\begin{center}
$T=\{M\in\mathrm{PGL}_n(E): A(R_i)=R_i\}$.
\end{center}
We claim that the natural map $i\colon H^1(G_{E/k},T) \rightarrow H^1(G_{E/k},\mathrm{PGL}_n(E))$ is injective. 

To prove the claim, we consider the group $\mathrm{GL}_{n+1}(E)$ and in there the set $U$ of matrices $N$ such that the coordinate vectors of $E^{n+1}$ representing the $R_i$ are the eigenvectors of $N$. We then get the following diagram:
\begin{displaymath}
\begin{xy}
  \xymatrix{
      1 \ar[r]     &   E^* \ar[r]\ar[d]^{id } &  U \ar[r] \ar[d]   &        T\ar[r] \ar[d]      & 1                   \\
      1 \ar[r]             &   E^*\ar[r]               &   \mathrm{GL}_{n+1}(E)\ar[r]   &       \mathrm{PGL}_{n+1}(E)\ar[r] & 1
  }
\end{xy}
\end{displaymath}
Since $U$ is an abelian subgroup, we can look at the long exact sequence of cohomology associated to the above sequence. The part which is of interest is the following:
\begin{displaymath}
\begin{xy}
  \xymatrix{
      H^1(G_{E/k},U) \ar[r]     &   H^1(G_{E/k},T) \ar[r]\ar[d]^{i} &  H^2(G_{E/k},E^*) \ar[d]^{id}                      \\
                  &   H^1(G_{E/k},\mathrm{PGL}_{n+1}(E))\ar[r]               &   H^2(G_{E/k},E^*)
  }
\end{xy}
\end{displaymath}
Now we observe that if $H^1(G_{E/k},U)=0$, the map at the top is injective and hence $i\colon H^1(G_{E/k},T)\rightarrow H^1(G_{E/k},\mathrm{PGL}_{n+1}(E))$. Indeed, write $R=\{R_1,...,R_{n+1}\}$ as the affine $k$-scheme $\mathrm{Spec}(F)$, with $F$ being a suitable $(n+1)$-dimensional $k$-algebra. Now consider the line bundle $\mathcal{O}_{R\otimes_k E}(1)$ over $R\otimes_k E$. One can show that $U$ is the automorphism group of $\mathcal{O}_{R\otimes_k E}(1)$ and is isomorphic to $(F\otimes_k E)^*$. From an extended version of Hilbert's 90 (see \cite{SE}, X.1, Exercise 2) we in fact get $H^1(G_{E/k},(F\otimes_k E)^*)=0$. Therefore $i\colon H^1(G_{E/k},T)\rightarrow H^1(G_{E/k},\mathrm{PGL}_{n+1}(E))$ is injective. Finally, from the injectivity of $i\colon H^1(G_{E/k},T)\rightarrow H^1(G_{E/k},\mathrm{PGL}_n(E))$ we conclude that $\zeta_{\sigma}$ and $\eta_{\sigma}$ are cohomologous via a coboundary with image in $T$, i.e
\begin{center}
$\zeta_{\sigma}=N\eta_{\sigma}(^{\sigma}N^{-1})$,
\end{center}
with $N\in T$. Hence
\begin{center}
$\psi(^{\sigma}\psi^{-1})=N\phi(^{\sigma}\phi^{-1})(^{\sigma}N^{-1})$
\end{center}
so that we finally obtain
\begin{center}
$^{\sigma}(\psi^{-1}N\phi)=\psi^{-1}N\phi$.
\end{center}
Thus $\psi^{-1}N\phi$ descents to an $k$-automorphism of $X$ that maps $x_0$ to $x_1$. This completes the proof.
\end{proof}
\begin{rema}
\textnormal{Note that Theorem 4.2 is proved in \cite{CO} for the special case where $X$ is a Brauer--Severi surface over an algebraic number field.}
\end{rema}
\section{Rational embeddings into Brauer--Severi varieties}
Recall the following definition given in \cite{ME}.
\begin{defi}
\textnormal{Let $X$ and $Y$ be integral, separated schemes of finite type over a field $k$. A \emph{rational embedding} $f\colon X\dashrightarrow Y$ is a rational map such that the restriction to a dense, open subset $U\subset X$ yields an immersion $U\rightarrow Y$. A morphism $U\rightarrow Y$ is an immersion if it gives an isomorphism of $U$ with an open subscheme of a closed subscheme of $Y$.} 
\end{defi}
The following lemma was communicated to me by Daniel Krashen.
\begin{lem}
Let $X$ be a Brauer--Severi variety of index $d$ corresponding to the central simple algebra $M_m(D)$. Let $k\subset L$ be a maximal separable subfield of $D$. Given an arbitrary open subset $U\subset X$, there exists always a closed point $x\in U$ with $k(x)\simeq L$ and $[k(x):k]=d$.
\end{lem}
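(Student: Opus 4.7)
My plan is to exploit that $L$ is a splitting field of $X$ and combine density of $L$-rational points in projective space with the index bound on residue field degrees. Since $L$ is a maximal separable subfield of the division algebra $D$, one has $[L:k]=\mathrm{deg}(D)=d$, and $L$ splits $D$, hence $M_m(D)$ and $X$, giving an isomorphism $X\otimes_k L\simeq \mathbb{P}^{md-1}_L$.

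Let $\pi\colon X\otimes_k L\to X$ denote the canonical projection. For any non-empty open $U\subset X$, the preimage $\pi^{-1}(U)=U\otimes_k L$ is a non-empty open subscheme of $\mathbb{P}^{md-1}_L$. Since $k$ (and hence $L$) is infinite by the standing assumption of Section 4, the $L$-rational points are Zariski dense in $\mathbb{P}^{md-1}_L$, so I can choose an $L$-point $\tilde{x}\in (X\otimes_k L)(L)$ lying in $U\otimes_k L$. Composing with $\pi$ produces a morphism $\mathrm{Spec}(L)\to X$ whose image is a closed point $x\in U$ (closed since $L/k$ is finite, forcing $k(x)$ to be finite over $k$), and functoriality yields an injective $k$-algebra homomorphism $k(x)\hookrightarrow L$.

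To conclude, I would invoke from Section 2 that $d=\mathrm{ind}(X)$ equals the smallest degree of a closed point of $X$: every closed point gives a splitting field of $X$ via its residue field, and the smallest separable splitting field has degree $d$. The chain $d\leq [k(x):k]\leq [L:k]=d$ then forces equality, so the embedding $k(x)\hookrightarrow L$ is a $k$-algebra isomorphism, giving $k(x)\simeq L$ with $[k(x):k]=d$ as required. There is no real obstacle beyond verifying that the image of $\tilde{x}$ under $\pi$ actually lies in $U$, which is immediate from $\pi^{-1}(U)=U\otimes_k L$; in particular Theorem 4.2 is not needed here, though it retrospectively shows that all such closed points form a single $\mathrm{Aut}_k(X)$-orbit.
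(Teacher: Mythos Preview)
Your proof is correct and follows essentially the same approach as the paper's own argument: use that $L$ splits $X$, pick an $L$-rational point in the dense open $U\otimes_k L\subset\mathbb{P}^{md-1}_L$, push it down to a closed point $x\in U$ with $k(x)\subset L$, and then conclude $k(x)=L$ by minimality of $L$ as a splitting field. The only cosmetic difference is that the paper phrases the last step as ``$L$ is a minimal splitting field, so the intermediate splitting field $k(x)$ must equal $L$,'' whereas you phrase it via the degree sandwich $d\le [k(x):k]\le d$; these are the same observation.
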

\begin{proof}
Note that $L\subset D$ is a degree $d$ separable field extension of $k$ such that $X\otimes_k L\simeq \mathbb{P}^n_L$. Now the set of rational points of $X\otimes_k L\simeq \mathbb{P}^n_L$ is a dense subset of $\mathbb{P}^n_L$. In particular, for the open subset $U_L:=U\otimes_k L\subset X\otimes_k L$, the set $U_L(L)=U(L)$ is non-empty. This gives us a morphism $\mathrm{Spec}(L)\rightarrow U$ whose image is a closed point $x\in U$. We then get $k\subset k(x)\subset L$ and, as $x\in U\subset X$ is a closed point, that $k(x)$ is a splitting field for $X$. But since $k\subset L$ is a minimal splitting field, it follows $k(x)\simeq L$. 
\end{proof}
We are now able to prove our first main theorem. 
\begin{thm}
Let $\mathcal{X}$ and $\mathcal{Y}$ be Brauer--Severi varieties corresponding to the central simple $k$-algebras $A=M_n(D)$ and $B=M_n(D')$ with $n>1$ arbitrary and assume $\mathrm{deg}(D)=\mathrm{deg}(D')$. Denote by $X$ and $Y$ the Brauer--Severi varieties corresponding to $D$ and $D'$. Then $A$ and $B$ generate the same cyclic subgroup in $\mathrm{Br}(k)$ if and only if there are rational embeddings $X\dashrightarrow \mathcal{Y}$ and $Y\dashrightarrow \mathcal{X}$.
\end{thm}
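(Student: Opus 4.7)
The plan is to prove the two directions separately. The easy direction (existence of rational embeddings $\Rightarrow$ same cyclic subgroup) is a fairly direct consequence of Amitsur's theorem, whereas the hard direction combines Lemma 5.2 with the automorphism result of Theorem 4.2.

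For the easy direction, suppose a rational embedding $f\colon X\dashrightarrow\mathcal{Y}$ is given by $U\hookrightarrow Z\hookrightarrow\mathcal{Y}$, with $U\subset X$ dense open and $Z\subset\mathcal{Y}$ closed. Since $U$ is dense in both $X$ and $Z$, the function fields satisfy $F(Z)\simeq F(X)$. Viewing the generic point of $Z$ through the closed immersion $Z\hookrightarrow\mathcal{Y}$ yields an $F(X)$-rational point of $\mathcal{Y}$, so that $\mathcal{Y}\otimes_k F(X)\simeq\mathbb{P}^{nd-1}_{F(X)}$ and $B\otimes_k F(X)$ is trivial in $\mathrm{Br}(F(X))$. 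By Theorem 3.1, the kernel of $\mathrm{Br}(k)\to\mathrm{Br}(F(X))$ is $\langle[A]\rangle$, hence $[B]\in\langle[A]\rangle$. Symmetric reasoning applied to the rational embedding $Y\dashrightarrow\mathcal{X}$ gives $[A]\in\langle[B]\rangle$, so the two cyclic subgroups coincide.

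For the hard direction, assume $\langle[A]\rangle=\langle[B]\rangle$, equivalently $\langle[D]\rangle=\langle[D']\rangle$. Fix a maximal separable subfield $L\subset D$; then $[L:k]=d:=\mathrm{deg}(D)=\mathrm{deg}(D')$, and since $L$ splits $D$ it also splits $D'$. A degree count then shows that $L$ is isomorphic to a maximal separable subfield of $D'$. By Lemma 5.2 I choose closed points $x_0\in X$ and $y_0\in Y\subset\mathcal{Y}$ with $k(x_0)\simeq L\simeq k(y_0)$. Because $[k(y_0):k]=d=\mathrm{ind}(\mathcal{Y})$, Theorem 4.2 supplies $k$-automorphisms of $\mathcal{Y}$ acting transitively on the closed points of $\mathcal{Y}$ whose residue field is isomorphic to $L$. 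The plan is then to exploit this flexibility, together with the dominant rational map $X\dashrightarrow Y$ of \cite{KO}, to produce a $(d-1)$-dimensional closed subvariety $Z\subset\mathcal{Y}$ that is birational to $X$ and passes through $y_0$; restricting $X\dashrightarrow Z$ to the common dense open and composing with $Z\hookrightarrow\mathcal{Y}$ will then give the desired rational embedding. The companion embedding $Y\dashrightarrow\mathcal{X}$ follows symmetrically.

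The main obstacle is producing such a $Z$. The naive candidate $Z=Y$, obtained from the composition $X\dashrightarrow Y\hookrightarrow\mathcal{Y}$, fails: the rational map of \cite{KO} is only generically finite (not birational), so its image is $Y$ itself, which represents the wrong Brauer class (that of $D'$) unless $X$ and $Y$ are already birational, which is precisely the Amitsur conjecture. To circumvent this I would exploit the fact that $\mathrm{dim}(\mathcal{Y})=nd-1$ is much larger than $\mathrm{dim}(X)=d-1$ and use an automorphism of $\mathcal{Y}$ coming from Theorem 4.2 to deform the image out of the linear subvariety $Y$ into a closed subscheme genuinely birational to $X$. Verifying that a suitable automorphism exists and that the resulting image has function field $F(X)$, with no accidental collapsing or extraneous components, is the technical heart of the argument.
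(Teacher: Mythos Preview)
Your treatment of the easy direction is correct and coincides with the paper's argument: a rational embedding $X\dashrightarrow\mathcal{Y}$ forces $F(X)$ to split $B$, and Amitsur's theorem (Theorem~3.1, or equivalently \cite{RO}, Theorem~5) then yields $[B]\in\langle[A]\rangle$; symmetry finishes it.

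The hard direction, however, has a genuine gap, and the gap is not merely technical. Your plan is to start from the dominant rational map $X\dashrightarrow Y$ of \cite{KO}, embed $Y$ into $\mathcal{Y}$, and then apply an automorphism of $\mathcal{Y}$ from Theorem~4.2 to ``deform the image out of the linear subvariety $Y$ into a closed subscheme genuinely birational to $X$.'' But an automorphism of $\mathcal{Y}$ is an isomorphism of $\mathcal{Y}$ with itself; it carries any closed subscheme to an \emph{isomorphic} closed subscheme. If the image of $X\dashrightarrow Y\hookrightarrow\mathcal{Y}$ is birational to $Y$, no automorphism of the ambient variety can ever make it birational to $X$. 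So the mechanism you propose cannot produce the required $Z$, and the sentence acknowledging this as ``the technical heart'' is in fact flagging an obstruction that your outline does not overcome.

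The paper's route is different and hinges on an input you have not used: the hypothesis $n>1$. This forces $\mathrm{ind}(A)<\mathrm{deg}(A)$, and by a theorem of Roquette (\cite{RO}, Theorem~4; see also \cite{TR}) the big Brauer--Severi varieties $\mathcal{X}$ and $\mathcal{Y}$ are then \emph{already birational}. Fix the isomorphism $f\colon\mathcal{U}\xrightarrow{\sim}\mathcal{V}$ between open subsets $\mathcal{U}\subset\mathcal{X}$ and $\mathcal{V}\subset\mathcal{Y}$. Now Lemma~5.2 and Theorem~4.2 are used not on $\mathcal{Y}$ but on $\mathcal{X}$: one finds a closed point $x_0\in\mathcal{U}$ with $k(x_0)\simeq L$ and a closed point $x_1\in X$ with $k(x_1)\simeq L$, and an automorphism of $\mathcal{X}$ sending $x_1$ to $x_0$. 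After this automorphism the linear subvariety $X\subset\mathcal{X}$ meets $\mathcal{U}$, so $U:=X\cap\mathcal{U}$ is a nonempty open of $X$ that is closed in $\mathcal{U}$. Transporting $U$ through the isomorphism $f$ gives a locally closed subscheme of $\mathcal{Y}$ isomorphic to $U$, and since $\mathcal{Y}$ is locally noetherian the resulting immersion $U\to\mathcal{Y}$ factors as open followed by closed. That is the rational embedding $X\dashrightarrow\mathcal{Y}$; the other one is obtained symmetrically. The missing idea in your proposal is precisely this use of the known birationality $\mathcal{X}\sim\mathcal{Y}$, which is where the assumption $n>1$ actually enters.
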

\begin{proof}
We first show that if there are rational embeddings $X\dashrightarrow \mathcal{Y}$ and $Y\dashrightarrow \mathcal{X}$, then $A$ and $B$ generate the same cyclic subgroup in $\mathrm{Br}(k)$. 

Consider the rational embedding $f\colon X\dashrightarrow \mathcal{Y}$ and denote by $U$ the domain of $f$ which is an open subset of $X$. By definition, $f$ can be factored as an open followed by a closed immersion 
\begin{displaymath}
\begin{xy}
  \xymatrix{U\ar[r]^{\phi} &Z\ar[r]^{\psi} &\mathcal{Y}}
\end{xy}.
\end{displaymath} Here $Z$ is a suitable closed subscheme of $\mathcal{Y}$. %Note that after base change to some field extension $k\subset L$ we again obtain a rational embedding $X\otimes_k L\dashrightarrow \mathcal{Y}\otimes_k L$. 
Now let $L:=F(X)$ be the function field of $X$. It follows that $L$ is a splitting field for $X$ and we therefore have a $L$-rational point in $U_L(L)=U(L)$. This gives us a morphism $\mathrm{Spec}(L)\rightarrow U$ and exploiting the fact that $\phi$ is an open and $\psi$ a closed immersion, we get a morphism $\mathrm{Spec}(L)\rightarrow \mathcal{Y}$. Thus $\mathcal{Y}(L)\neq\emptyset$ and hence $L$ splits $\mathcal{Y}$. Obviously, $L$ is also a splitting field for $Y$. Repeating the argument for the rational embedding $Y\dashrightarrow \mathcal{X}$ yields that $F(Y)$ splits $X$. So by \cite{RO}, Theorem 5 we conclude that $D$ is Brauer-equivalent to $D'^{\otimes r}$ and $D'$ to $D^{\otimes s}$ for suitable positive integers $r$ and $s$. This implies that $D$ and $D'$, and therefore $A$ and $B$, generate the same cyclic subgroup in $\mathrm{Br}(k)$. 

Now assume that $D$ and $D'$ generate the same cyclic subgroup in $\mathrm{Br}(k)$. First note that by definition $A$ and $B$ also generate the same cyclic subgroup. Since $\mathrm{ind}(A)<\mathrm{deg}(A)$, it follows from \cite{RO}, Theorem 4 that $\mathcal{X}$ and $\mathcal{Y}$ have to be birational. Denote by $\mathcal{U}\subset \mathcal{X}$ and $\mathcal{V}\subset \mathcal{Y}$ the open subsets with $f\colon \mathcal{U} \stackrel{\sim}\rightarrow\mathcal{V}$.

Now let $L\subset D$ be a maximal separable subfield. By Lemma 5.2 we can always find a closed point $x_0\in \mathcal{U}\subset \mathcal{X}$ with $k(x_0)\simeq L$ and $[k(x_0):k]=\mathrm{ind}(A)$. So $x_0$ is defined over the field extension $L=k(x_0)$. Since the field $L$ also splits $X$, there is also a closed point $x_1\in X$ such that $k(x_1)\simeq k(x_0)$. We now can apply Theorem 4.2 to get an $k$-automorphism $\Phi$ of $\mathcal{X}$ mapping $x_1$ to $x_0$. So considering the composition 
\begin{displaymath}
\begin{xy}
  \xymatrix{X\ar[r]^{j} &\mathcal{X}\ar[r]^{\Psi} &\mathcal{X}}
\end{xy},
\end{displaymath}
where $j\colon X\hookrightarrow \mathcal{X}$ is a chosen closed immersion, we see that $\mathcal{U}\cap (\Phi\circ j(X))$ is non-empty. So without loss of generality we can assume that $X$ is embedded into $\mathcal{X}$ in such a way that $\mathcal{U}\cap X$ is non-empty. We set $U:=\mathcal{U}\cap X$. By definition, $U$ is closed in $\mathcal{U}$ and the isomorphism $f\colon \mathcal{U}\stackrel{\sim}\rightarrow\mathcal{V}$ gives us a closed subscheme $f(U)\subset \mathcal{V}$. Therefore, the restriction $f_{|U}\colon U\rightarrow \mathcal{Y}$ is a morphism that can be factored as a closed immersion followed by an open one. As $\mathcal{Y}$ is locally noetherian, this is equivalent to the fact that $f_{|U}\colon U\rightarrow \mathcal{Y}$ can also be factored as an open immersion followed by a closed one. This gives us the existence rational embeddings $X\dashrightarrow \mathcal{Y}$ in the sense of Definition 5.1. 

Repeating the arguments from above for the birational inverse $\mathcal{Y}\dashrightarrow\mathcal{X}$ and a closed embedding $Y\hookrightarrow \mathcal{Y}$ gives us a rational embedding $Y\dashrightarrow \mathcal{X}$. This completes the proof. 
\end{proof}
\begin{rema}
\textnormal{In Theorem 5.3 one only has to require the existence of rational maps $X\dashrightarrow \mathcal{Y}$ and $Y\dashrightarrow \mathcal{X}$ to obtain that $A$ and $B$ generate the same subgroup.}
\end{rema}

Now let $X$ and $Y$ be Brauer--Severi varieties corresponding to division algebras $D$ and $D'$ of the same degree and denote by $\mathcal{Y}$ the Brauer--Severi variety corresponding to $M_n(D')$ for some fixed $n>1$. Suppose there is a rational embedding $X\dashrightarrow \mathcal{Y}$. As mentioned above, this means that there is an open subset $U\subset X$ such that $X\dashrightarrow \mathcal{Y}$ can be factored as $U\rightarrow Z\rightarrow \mathcal{Y}$, where the first arrow is an open and the latter one a closed immersion. In what follows, we want to study the geometric relation between $Z\subset \mathcal{Y}$ and the minimal linear subvariety $Y\subset \mathcal{Y}$. Note that in general the intersection $Z\cap Y$ can be empty. However, the next lemma shows that we can always assume $Z\cap Y$ is non-empty.
\begin{lem}
Let $X\dashrightarrow \mathcal{Y}$ be the rational embedding from above. Then there is an automorphism $\Psi$ of $\mathcal{Y}$ such that $\Psi(Z)\cap Y$ is non-empty.  
\end{lem}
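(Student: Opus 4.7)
The plan is to find matching closed points in $Z$ and in the minimal linear subvariety $Y$ — both with residue field equal to a chosen degree-$d$ separable extension $L$ of $k$, where $d=\deg(D)=\deg(D')=\mathrm{ind}(\mathcal{Y})$ — and then invoke Theorem 4.2 on $\mathcal{Y}$ to produce an automorphism $\Psi$ that moves the point of $Z$ into $Y$. The factorisation of the rational embedding as $U\stackrel{\phi}{\hookrightarrow}Z\stackrel{\psi}{\hookrightarrow}\mathcal{Y}$ is what allows us to promote a closed point of $U$ to a closed point of $Z$ sitting inside $\mathcal{Y}$.

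First I would fix a maximal separable subfield $L\subset D'$, so that $[L:k]=d$ and $L$ splits $D'$. Applying Lemma 5.2 to $Y$ itself (viewed as its own open subset, with corresponding division algebra $D'$) produces a closed point $y\in Y$ with $k(y)\simeq L$. Next I need an analogous point on the $X$-side, inside the domain $U$ of $\phi$. By the already-established direction of Theorem 5.3 (and Remark 5.4), the existence of the rational embedding $X\dashrightarrow\mathcal{Y}$ forces $D$ and $D'$ to generate the same cyclic subgroup of $\mathrm{Br}(k)$; in particular $L$ splits $D$, and since $[L:k]=\deg(D)$, Corollary 4.5.9 of \cite{GS} lets us embed $L$ into $D$ as a maximal separable subfield. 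Lemma 5.2 applied to the open subset $U\subset X$ then yields a closed point $x\in U$ with $k(x)\simeq L$. Setting $z:=\phi(x)\in Z$, the fact that $\phi$ is an open immersion shows $k(z)\simeq L$, and since $\mathcal{Y}$ is of finite type over $k$ the point $z$ remains closed in $\mathcal{Y}$.

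At this stage $y$ and $z$ are two closed points of the Brauer--Severi variety $\mathcal{Y}$ of index $d$, whose residue fields are isomorphic as $k$-algebras to the same degree-$d$ separable extension $L$. Theorem 4.2 then delivers an automorphism $\Psi\in\mathrm{Aut}_k(\mathcal{Y})$ with $\Psi(z)=y$, so that $y\in\Psi(Z)\cap Y$ and the intersection is non-empty. The only nontrivial point in the argument is the production of a common residue field $L$ on both sides; this rests on the symmetric splitting property of $D$ and $D'$ provided by Theorem 5.3, which lets us reinterpret a maximal separable subfield of $D'$ as a maximal separable subfield of $D$. Everything else is a bookkeeping application of Lemma 5.2 and Theorem 4.2.
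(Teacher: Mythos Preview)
Your overall strategy coincides with the paper's: produce closed points of $Z$ and of $Y$ with the same separable residue field of degree $d=\mathrm{ind}(\mathcal{Y})$, then invoke Theorem~4.2 on $\mathcal{Y}$. The difference lies in where you pick $L$. The paper takes $L$ to be a maximal separable subfield of $D$, finds $x_0\in U$ with $k(x_0)\simeq L$ via Lemma~5.2, pushes it forward to $y_0=f(x_0)\in\mathcal{Y}$, and observes that the very existence of $y_0$ with residue field $L$ forces $L$ to split $\mathcal{Y}$ and hence $Y$; this immediately yields $y_1\in Y$ with $k(y_1)\simeq L$. No appeal to Theorem~5.3 is needed.

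You instead start with $L\subset D'$ and then have to argue that $L$ embeds into $D$ in order to apply Lemma~5.2 on $U\subset X$. For this you invoke the ``already-established direction'' of Theorem~5.3 to conclude $\langle D\rangle=\langle D'\rangle$. That direction, however, requires rational embeddings (or maps, by Remark~5.4) in \emph{both} directions $X\dashrightarrow\mathcal{Y}$ and $Y\dashrightarrow\mathcal{X}$, whereas the standing hypothesis of Lemma~5.5 supplies only the single rational embedding $X\dashrightarrow\mathcal{Y}$. From this one obtains only $D'\in\langle D\rangle$, not the reverse inclusion, so a splitting field of $D'$ need not split $D$, and your embedding of $L$ into $D$ is not justified as written. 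The fix is simply to reverse your choice and take $L\subset D$; then your argument collapses to the paper's and the detour through Theorem~5.3 disappears.
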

\begin{proof}
Denote by $f$ the composition $U\rightarrow Z\rightarrow \mathcal{Y}$, with $U$ a suitable open subset of $X$. According to Lemma 5.2 there is a closed point $x_0\in U$ with $k(x_0)\simeq L$ , where $L$ is a maximal separable subfield of $D$. By the definition of $f$ we have an isomorphism of $k$-algebras $k(x_0)\simeq k(y_0)$ for $y_0:=f(x_0)\in \mathcal{Y}$. Since $L$ is a splitting field for $Y$, there is a closed point $y_1\in Y$ with $k(y_1)\simeq L$. Considering $Y$ as a linear subvariety of $\mathcal{Y}$ we have $y_1\in \mathcal{Y}$. According to Theorem 4.2 there exists an $k$-automorphism $\Psi$ of $\mathcal{Y}$ mapping $y_0$ to $y_1$. So we obtain the following rational embedding 
\begin{eqnarray}
U\longrightarrow Z\longrightarrow \mathcal{Y}\overset{\Psi}{\longrightarrow}\mathcal{Y}.
\end{eqnarray}
Now consider $U\rightarrow Z \rightarrow \mathcal{Y}\overset{\Psi}{\rightarrow}\mathcal{Y}\supset Y$ and denote by $\mathcal{Z}$ the image $\Psi(Z)$. It is clear from the construction of the rational embedding (1) that $\mathcal{Z}\cap Y$ is non-empty.
\end{proof}
\begin{cor}
Let $g\colon X\dashrightarrow \mathcal{Y}$ be the rational embedding (1) and $Z'$ the irreducible component of $\mathcal{Z}\cap Y$ which is birational to $X$. If $\mathrm{dim}(Z')=\mathrm{dim}(Y)$, then $X$ and $Y$ are birational.
\end{cor}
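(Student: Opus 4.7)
The plan is short because the hypothesis does most of the work. I will argue that under the dimension assumption the component $Z'$ must equal $Y$, and then invoke the birationality already provided.

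First, I would recall the general setup: $Z'$ is by definition an irreducible component of the scheme-theoretic intersection $\mathcal{Z}\cap Y\subset Y$, so $Z'$ is in particular a closed irreducible subvariety of $Y$. The next observation I would use is a property of Brauer--Severi varieties recalled in Section 2: descent theory guarantees that $Y$ is projective, integral (hence irreducible) and smooth over $k$.

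Combining these two facts, I would argue that an irreducible closed subvariety of an irreducible scheme whose dimension matches that of the ambient scheme must coincide with the ambient scheme. Concretely, since $Y$ is irreducible and $Z'\subset Y$ is closed and irreducible with $\dim(Z')=\dim(Y)$, we obtain the equality $Z'=Y$. The hypothesis of the corollary states that $Z'$ is birational to $X$, so substituting yields that $Y$ is birational to $X$, which is the desired conclusion.

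There is no real obstacle here: the whole content lies in the dimension-plus-irreducibility trick together with the integrality of Brauer--Severi varieties. The only thing to be slightly careful about is that ``irreducible component of $\mathcal{Z}\cap Y$'' is interpreted so that $Z'$ is itself an integral closed subscheme of $Y$, which is the standard convention and the one used implicitly in the preceding Lemma 5.5 and its corollary.
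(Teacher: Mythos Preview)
Your proposal is correct and follows exactly the same route as the paper's proof: the paper simply notes that $\dim(Z')=\dim(Y)$ forces $Z'=Y$, and then uses that $X$ is birational to $Z'$ to conclude. You have merely spelled out why the dimension equality implies $Z'=Y$ (closed irreducible in irreducible of the same dimension), which the paper leaves implicit.
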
 
\begin{proof}
The assumption $\mathrm{dim}(Z')=\mathrm{dim}(Y)$ implies that $Z'=Y$ and as $X$ is birational to $Z'$ the assertion follows.
\end{proof}
It begs the question of what happens if $\mathrm{dim}(Z')$ is strictly smaller than the dimension of $Y$. Below we will observe that in this case the irreducible components of $\mathcal{Z}\cap Y\subset Y$ are related to the so called \emph{AS-bundles} of $Y$ which were introduced in \cite{N}. We recall the following definition contained in loc. cit..
\begin{defi}
\textnormal{Let $X$ be a $k$-scheme. A locally free sheaf $\mathcal{E}$ of finite rank on $X$ is called \emph{absolutely split} if it splits as a direct sum of invertible sheaves on $X\otimes_k \bar{k}$. For an absolutely split locally free sheaf we shortly write \emph{AS-bundle}.}
\end{defi}
In \cite{N} we classified all $AS$-bundles on proper $k$-schemes. Among others, we studied more closely the $AS$-bundles on Brauer--Severi varieties. In Section 6 of loc. cit. it is proved that the indecomposable $AS$-bundles on a arbitrary Brauer--Severi variety $X$ are locally free sheaves $\mathcal{W}_i$, $i\in \mathbb{Z}$, such that $\mathcal{W}_i\otimes_k \bar{k}\simeq \mathcal{O}(i)^{\oplus \mathrm{ind}(A^{\otimes i})}$. Here $A$ is the central simple algebra corresponding to $X$. These $\mathcal{W}_i$ are unique up to isomorphism and we have $\mathcal{W}^{\vee}_i\simeq \mathcal{W}_{-i}$ and $\mathcal{W}_0\simeq \mathcal{O}_X$. On the level of $K$-theory we observe that these $\mathcal{W}_i$ generate $K_0(X)$. To be precise; for $X\otimes_k\bar{k}\simeq \mathbb{P}^n$ we denote by $h\in K_0(\mathbb{P}^n)$ the class of $\mathcal{O}_{\mathbb{P}^n}(-1)$. We then have the following well-known result (see \cite{Q}, \text{\S}8, Theorem 4.1).
\begin{thm}
The restriction map $\mathrm{res}\colon K_0(X)\rightarrow K_0(\mathbb{P}^n)$ is injective and its image is additively generated by $\mathrm{ind}(A^{\otimes l})\cdot h^l$ with $0\leq l\leq \mathrm{deg}(A)-1$.
\end{thm}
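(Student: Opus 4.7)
The plan is to deduce this from Quillen's projective bundle formula for Brauer--Severi schemes \cite{Q}, \S 8. Applied to the Azumaya algebra $A$ over $\mathrm{Spec}(k)$, that formula yields a direct sum decomposition
\begin{center}
$K_0(X)\simeq \bigoplus_{l=0}^{n}K_0(A^{\otimes l})$,
\end{center}
where each summand $K_0(A^{\otimes l})\simeq \mathbb{Z}$ is generated by the class of a simple $A^{\otimes l}$-module, and the inclusion of the $l$-th summand into $K_0(X)$ is realized by a canonical locally free sheaf $\mathcal{V}_l$ obtained from the universal twisted bundle on $X$. I would first record this decomposition together with the explicit shape of its generators.

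Next, I would identify each $\mathcal{V}_l$ with the $AS$-bundle $\mathcal{W}_{-l}$ recalled before the statement. Using the decomposition of the simple $A^{\otimes l}$-module under $A^{\otimes l}\otimes_k\bar{k}\simeq M_{(n+1)^l}(\bar{k})$ into $\mathrm{ind}(A^{\otimes l})$ copies of the unique simple module over the matrix algebra, one shows that $\mathcal{V}_l\otimes_k\bar{k}\simeq \mathcal{O}_{\mathbb{P}^n}(-l)^{\oplus \mathrm{ind}(A^{\otimes l})}$. The uniqueness of indecomposable $AS$-bundles recalled above then forces $\mathcal{V}_l\simeq \mathcal{W}_{-l}$, and since $\mathrm{ind}(A^{\otimes -l})=\mathrm{ind}(A^{\otimes l})$, one obtains
\begin{center}
$\mathrm{res}([\mathcal{W}_{-l}])=\mathrm{ind}(A^{\otimes l})\cdot [\mathcal{O}_{\mathbb{P}^n}(-l)]=\mathrm{ind}(A^{\otimes l})\cdot h^l$.
\end{center}

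With the generators and their images pinned down, both conclusions are formal. The group $K_0(\mathbb{P}^n)$ is free abelian on $1,h,\ldots,h^n$, so the elements $\mathrm{ind}(A^{\otimes l})\cdot h^l$ for $0\leq l\leq \mathrm{deg}(A)-1=n$ are $\mathbb{Z}$-linearly independent. The restriction map therefore decomposes as a direct sum of injections $\mathbb{Z}\hookrightarrow \mathbb{Z}$, $1\mapsto \mathrm{ind}(A^{\otimes l})$, which proves injectivity and shows that the total image is precisely the free abelian subgroup additively generated by the $\mathrm{ind}(A^{\otimes l})\cdot h^l$.

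The hard part is Quillen's decomposition itself. Its proof requires the construction of the universal twisted bundle on $X$ via the associated Azumaya algebra, the identification of the canonical inclusions $K_0(A^{\otimes l})\hookrightarrow K_0(X)$, and a devissage showing that these inclusions exhaust and split $K_0(X)$. Granting that result, the remaining steps are straightforward bookkeeping carried out on the geometric splitting $X\otimes_k\bar{k}\simeq \mathbb{P}^n$.
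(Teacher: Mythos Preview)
The paper does not give its own proof of this theorem; it is quoted verbatim as a known result with the citation ``see \cite{Q}, \S 8, Theorem~4.1'' and no argument is supplied. Your proposal is therefore not competing with a proof in the paper but rather unpacking the cited reference, and it does so correctly: Quillen's projective bundle formula for Brauer--Severi schemes over $\mathrm{Spec}(k)$ gives the decomposition $K_0(X)\simeq\bigoplus_{l=0}^{n}K_0(A^{\otimes l})$, each summand is $\mathbb{Z}$ generated by the simple module, and the base-change computation identifying the image of that generator with $\mathrm{ind}(A^{\otimes l})\cdot h^l$ is exactly the mechanism behind the stated image.

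One small remark: the detour through the $AS$-bundles $\mathcal{W}_{-l}$ is not needed to prove the theorem as stated. It suffices to compute $\mathrm{res}$ on the Quillen generators directly by base change, without invoking the uniqueness of indecomposable $AS$-bundles. What you have written is in effect a simultaneous proof of the theorem and of the paper's Corollary~5.9, which is fine, but the theorem itself only requires the numerical identity $\mathrm{res}([\mathcal{V}_l])=\mathrm{ind}(A^{\otimes l})\cdot h^l$, not the bundle isomorphism $\mathcal{V}_l\simeq\mathcal{W}_{-l}$.
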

\begin{cor}
Let $X$ be a Brauer--Severi variety corresponding to $A$. Then the $AS$-bundles $\mathcal{W}^{\vee}_i$, $0\leq i\leq \mathrm{deg}(A)-1$, additively generate $K_0(X)$.  
\end{cor}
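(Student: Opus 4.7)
The plan is to combine the explicit description of the indecomposable $AS$-bundles $\mathcal{W}_i$ recalled before Theorem 5.9 with Quillen's generator list in Theorem 5.9, exploiting the injectivity of the restriction map $\mathrm{res}\colon K_0(X)\to K_0(\mathbb{P}^n)$.

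First I would compute the class of $\mathcal{W}_i^{\vee}$ in $K_0(X)$ after base change to $\bar{k}$. Using the identity $\mathcal{W}_i^{\vee}\simeq \mathcal{W}_{-i}$ and the defining isomorphism $\mathcal{W}_{-i}\otimes_k\bar{k}\simeq \mathcal{O}(-i)^{\oplus \mathrm{ind}(A^{\otimes -i})}$, I obtain
\begin{equation*}
\mathcal{W}_i^{\vee}\otimes_k\bar{k}\;\simeq\;\mathcal{O}(-i)^{\oplus \mathrm{ind}(A^{\otimes i})},
\end{equation*}
where I use the elementary fact that $\mathrm{ind}(A^{\otimes -i})=\mathrm{ind}(A^{\otimes i})$, since a central simple algebra and its opposite have the same index (they have isomorphic Wedderburn division summands of the same degree). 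With the convention $h=[\mathcal{O}(-1)]\in K_0(\mathbb{P}^n)$ and the ring structure of $K_0(\mathbb{P}^n)$ giving $h^l=[\mathcal{O}(-l)]$, this identifies the restriction as
\begin{equation*}
\mathrm{res}\bigl([\mathcal{W}_i^{\vee}]\bigr)\;=\;\mathrm{ind}(A^{\otimes i})\cdot h^i \qquad (0\leq i\leq \mathrm{deg}(A)-1).
\end{equation*}

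Next I would invoke Theorem 5.9 which states that the image of $\mathrm{res}$ is additively generated by precisely the classes $\mathrm{ind}(A^{\otimes l})\cdot h^l$ for $0\leq l\leq \mathrm{deg}(A)-1$. Hence the restrictions of the $\mathcal{W}_i^{\vee}$ in the stated range hit an additive generating set of $\mathrm{image}(\mathrm{res})$. Since $\mathrm{res}$ is injective (again by Theorem 5.9), any $\mathbb{Z}$-linear relation among the classes $[\mathcal{W}_i^{\vee}]$ that expresses a given element of $K_0(X)$ downstairs lifts uniquely, so the $[\mathcal{W}_i^{\vee}]$ with $0\leq i\leq \mathrm{deg}(A)-1$ additively generate $K_0(X)$.

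There is essentially no real obstacle here; the only point worth verifying carefully is the symmetry $\mathrm{ind}(A^{\otimes i})=\mathrm{ind}(A^{\otimes -i})$, which is needed to match the ranks of the $\mathcal{W}_i^{\vee}$ against the coefficients appearing in Theorem 5.9. Once that identification is in place the corollary follows immediately from the injectivity of $\mathrm{res}$.
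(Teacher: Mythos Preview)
Your argument is correct and follows essentially the same route as the paper: compute $\mathrm{res}([\mathcal{W}_i^{\vee}])=\mathrm{ind}(A^{\otimes i})\cdot h^i$ via the base-change description of $\mathcal{W}_i$, then invoke Quillen's theorem (Theorem 5.8) for both the image generators and the injectivity of $\mathrm{res}$. If anything, your version is slightly more careful than the paper's, since you make explicit the passage through $\mathcal{W}_i^{\vee}\simeq\mathcal{W}_{-i}$ and the symmetry $\mathrm{ind}(A^{\otimes i})=\mathrm{ind}(A^{\otimes -i})$, which the paper leaves implicit.
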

\begin{proof}
As $\mathcal{W}_i\otimes_k \bar{k}\simeq \mathcal{O}_{\mathbb{P}^n}(i)^{\oplus \mathrm{ind}(A^{\otimes i})}$, we see that $\mathrm{res}([\mathcal{W}^{\vee}_i])=[\mathcal{O}_{\mathbb{P}^n}(i)^{\oplus \mathrm{ind}(A^{\otimes i})}]=\mathrm{ind}(A^{\otimes i})\cdot h^i\in K_0(\mathbb{P}^n)$. The assertion then follows from Theorem 5.8.
\end{proof}

We shortly recall how $K_0(X)$ is related to the Chow groups $CH^i(X)$. 
Note that the Grothendieck group $K_0(X)$ admits a topological filtration
\begin{eqnarray*}
K_0(X)=K_0(X)^{(0)}\supset K_0(X)^{(1)}\supset ...\supset K_0(X)^{(d)}\supset K_0(X)^{(d+1)}=0
\end{eqnarray*} where $d=\mathrm{dim}(X)$ and $K_0(X)^{(i)}$ is the subgroup generated by $[\mathcal{O}_Z]$ as $Z$ runs over all closed subvarieties of codimension $\geq i$ (see \cite{GS}, p.233). We then set $\mathrm{gr}^iK_0(X):=K_0(X)^{(i)}/K_0(X)^{(i+1)}$. 

Now from the Brown--Gersten--Quillen spectral sequence (see \cite{GS}, p.234) we obtain natural surjective maps $\varphi^i_X\colon CH^i(X)\rightarrow \mathrm{gr}^iK_0(X)$. In fact these maps are given by $[Z]\mapsto \mathcal{O}_Z$. If $\mathrm{deg}(A)$ is a prime, the maps $\varphi^i_X$ are isomorphisms (see \cite{GS}, Lemma 8.3.6). 
In this context we have the following theorem (see \cite{KA1}, Theorem 1).
\begin{thm}
Assume $\mathrm{ind}(A)=\mathrm{per}(A)=r$, then for each $0\leq i\leq d$ the map $\mathrm{res}_i\colon \mathrm{gr}^iK_0(X)\rightarrow \mathrm{gr}^iK_0(\mathbb{P}^d)$ is injective and has image generated by $r/(i,r)\cdot(h-1)^i$. Here $(i,r)$ denotes the greatest common divisor of $i$ and $r$.
\end{thm}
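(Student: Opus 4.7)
The plan is to derive this refined structure on graded pieces from Theorem 5.8 by a linear-algebra computation in $K_0(\mathbb{P}^d)\cong\mathbb{Z}[h]/(h-1)^{d+1}$. Under this presentation, the topological filtration on $K_0(\mathbb{P}^d)$ agrees with the $(h-1)$-adic filtration, so $\mathrm{gr}^iK_0(\mathbb{P}^d)$ is a free cyclic group generated by the class of $(h-1)^i$ for $0\le i\le d$. Restriction to the algebraic closure preserves codimension of support, hence $\mathrm{res}$ respects the topological filtrations and induces maps $\mathrm{res}_i$; injectivity of $\mathrm{res}_i$ is automatic once one establishes $K_0(X)^{(i)}=K_0(X)\cap K_0(\mathbb{P}^d)^{(i)}$, which in turn follows from the injectivity of $\mathrm{res}$ itself in Theorem 5.8. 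Thus the real content of the statement is the identification of the image.

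The next technical step is to use the hypothesis $\mathrm{ind}(A)=\mathrm{per}(A)=r$ to establish the closed formula $\mathrm{ind}(A^{\otimes l})=r/\gcd(l,r)$. This rests on three standard facts: the period of $A^{\otimes l}$ is $r/\gcd(l,r)$; period and index share the same prime divisors; and $\mathrm{ind}(A^{\otimes l})$ divides $\mathrm{ind}(A)=r$. Substituting into Theorem 5.8 exhibits $K_0(X)$ as the $\mathbb{Z}$-span of the vectors $w_l:=\tfrac{r}{\gcd(l,r)}h^l$, $0\le l\le d$. Rewriting in the $(h-1)$-basis by the binomial theorem
\begin{equation*}
w_l=\frac{r}{\gcd(l,r)}\sum_{j=0}^{l}\binom{l}{j}(h-1)^j
\end{equation*}
presents the inclusion $K_0(X)\hookrightarrow K_0(\mathbb{P}^d)$ by an upper-triangular matrix in the ordered basis $\{(h-1)^j\}$. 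After row-reducing to eliminate the $(h-1)^j$-entries with $j<i$ using the generators $w_{l'}$ with $l'<i$, the image of $\mathrm{res}_i$ in $\mathbb{Z}\cdot(h-1)^i$ is exactly the subgroup generated by the gcd of the leading coefficients $\tfrac{r}{\gcd(l,r)}\binom{l}{i}$ for $i\le l\le d$.

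The main obstacle is the purely number-theoretic identity
\begin{equation*}
\gcd\Bigl\{\tfrac{r}{\gcd(l,r)}\tbinom{l}{i}\;:\;i\le l\le d\Bigr\}=\frac{r}{\gcd(i,r)}.
\end{equation*}
I would prove it one prime $p\mid r$ at a time. The upper bound is immediate: taking $l=i$ gives the term $r/\gcd(i,r)$, so the gcd divides it. For the lower bound one must show that every $\tfrac{r}{\gcd(l,r)}\binom{l}{i}$ is divisible by $r/\gcd(i,r)$; in terms of $p$-adic valuations, using $v_p(r/\gcd(m,r))=\max\bigl(0,v_p(r)-v_p(m)\bigr)$, this amounts to
\begin{equation*}
\max\bigl(0,v_p(r)-v_p(i)\bigr)\le\max\bigl(0,v_p(r)-v_p(l)\bigr)+v_p\tbinom{l}{i},
\end{equation*}
which I would verify by Kummer's theorem (which reads $v_p\binom{l}{i}$ as the number of carries in the base-$p$ addition of $i$ and $l-i$) via a short case analysis on whether $v_p(l)$ is smaller than, equal to, or larger than $v_p(i)$. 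Assembling these local inequalities across the primes dividing $r$ yields the asserted gcd and completes the identification of the image.
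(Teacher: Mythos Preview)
The paper does not prove this statement; it simply quotes it from Karpenko \cite{KA1}, Theorem 1. So there is no in-paper proof to compare with, and your proposal must stand on its own.

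Your argument contains a genuine gap at the step ``$K_0(X)^{(i)}=K_0(X)\cap K_0(\mathbb{P}^d)^{(i)}$, which in turn follows from the injectivity of $\mathrm{res}$ itself.'' Injectivity of $\mathrm{res}$ gives only the forward inclusion $\mathrm{res}\bigl(K_0(X)^{(i)}\bigr)\subset K_0(\mathbb{P}^d)^{(i)}$; it says nothing about the reverse. Your entire linear-algebra computation determines the image of $M\cap F^i$ in $\mathrm{gr}^iK_0(\mathbb{P}^d)$, where $M=\mathrm{res}\,K_0(X)$ and $F^i=K_0(\mathbb{P}^d)^{(i)}$, and this is indeed $(r/\gcd(i,r))\cdot\mathbb{Z}$. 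But the theorem asks for the image of $\mathrm{res}\bigl(K_0(X)^{(i)}\bigr)$, which a priori is only a subgroup of $M\cap F^i$. Likewise, injectivity of $\mathrm{res}_i$ needs $\mathrm{res}\bigl(K_0(X)^{(i)}\bigr)\cap F^{i+1}\subset \mathrm{res}\bigl(K_0(X)^{(i+1)}\bigr)$, which again does not follow from injectivity of $\mathrm{res}$ alone. Establishing this compatibility of the topological filtration with the $(h-1)$-adic one is precisely the nontrivial content of Karpenko's result (handled there via the $\gamma$-filtration, which is functorial in $\lambda$-ring maps and coincides with the topological filtration on $\mathbb{P}^d$); without it you have only proved the upper bound on the image.

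There is also a smaller gap: the three ``standard facts'' you list do not pin down $\mathrm{ind}(A^{\otimes l})$. They give $r/\gcd(l,r)=\mathrm{per}(A^{\otimes l})\mid \mathrm{ind}(A^{\otimes l})\mid r$ with the same prime support, but for instance when $r=4$, $l=2$ this leaves both $2$ and $4$ as possibilities. You need the further (true but nontrivial) statement that $\mathrm{per}(A)=\mathrm{ind}(A)$ forces $\mathrm{per}(A^{\otimes l})=\mathrm{ind}(A^{\otimes l})$ for all $l$, proved by $p$-primary reduction and a subfield/index-reduction argument. Once both gaps are filled, your row-reduction and the Kummer-type inequality for $v_p\binom{l}{i}$ are correct and do yield the asserted gcd.
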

%Now let $X$, $Y$ and $\Phi_{\mathcal{P}}$ be as in Theorem 7.1. Consider the closed subscheme $\mathrm{supp}(\mathcal{P})\subset X\times Y$ and the projections $p\colon X\times Y\rightarrow X$ and $q\colon X\times Y\rightarrow Y$. We get closed subschemes $Z:=p(\mathrm{supp}(\mathcal{P}))\subset X$ and $Z':=q(\mathrm{supp}(\mathcal{P}))\subset Y$ which in general have many irreducible components. 

Theorem 5.10 now enables us to prove the following result.
\begin{thm}
Let $X\dashrightarrow \mathcal{Y}$ be the rational embedding (1) and assume $\mathrm{per}(D')=\mathrm{ind}(D')=p$ is a prime. If $\mathrm{dim}(\mathcal{Z}\cap Y)<\mathrm{dim}(Y)$, then for all irreducible components $Z'$ of $\mathcal{Z}\cap Y\subset Y$ one has $\varphi^l_Y([Z'])=r_l\cdot((-1)^lp\cdot[\mathcal{O}_Y]+\sum^l_{j=1}{l\choose j}\cdot[\mathcal{W}^{\vee}_j])$ with $l=\mathrm{deg}(D')-i-1$, where $i=\mathrm{dim}(Z')$. 
\end{thm}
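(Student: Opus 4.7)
The plan is to compute $\varphi^l_Y([Z']) = [\mathcal{O}_{Z'}]$ directly in $\mathrm{gr}^l K_0(Y)$, using Corollary 5.9 and Theorem 5.10 to pin down $\mathrm{gr}^l K_0(Y)$ and to express its generator in terms of the $AS$-bundles on $Y$. Since $\dim(Y) = \deg(D') - 1 = p - 1$ and $\dim(Z') = i$, the codimension of $Z'$ in $Y$ equals $l = p - 1 - i = \deg(D') - i - 1$, so $[Z'] \in CH^l(Y)$. Because $\deg(D') = p$ is prime, the natural surjection $\varphi^l_Y \colon CH^l(Y) \to \mathrm{gr}^l K_0(Y)$ is an isomorphism (the remark recalled just before Theorem 5.10 via \cite{GS}, Lemma 8.3.6), so $\varphi^l_Y([Z']) = [\mathcal{O}_{Z'}]$.

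Since $\mathrm{per}(D') = \mathrm{ind}(D') = p$, Theorem 5.10 applies with $r = p$ and $d = p - 1$. For $1 \leq l \leq p - 1$ one has $\gcd(l, p) = 1$, so $\mathrm{res}_l\colon \mathrm{gr}^l K_0(Y) \hookrightarrow \mathrm{gr}^l K_0(\mathbb{P}^{p-1})$ is injective with image the cyclic group generated by $p(h-1)^l$. Hence $\mathrm{gr}^l K_0(Y) \cong \mathbb{Z}$, with a unique generator $g_l$ whose restriction equals $p(h-1)^l$, and $[\mathcal{O}_{Z'}] = r_l \cdot g_l$ for a well-defined integer $r_l$. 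To produce $g_l$ as an explicit lift in $K_0(Y)$, I would use Corollary 5.9: the hypothesis $\mathrm{ind}(D') = p$ prime forces $\mathrm{ind}(D'^{\otimes j}) = p$ for every $1 \leq j \leq p - 1$, whence $\mathrm{res}([\mathcal{O}_Y]) = 1$ and $\mathrm{res}([\mathcal{W}^{\vee}_j]) = p \cdot h^j$. Expanding $p(h-1)^l = (-1)^l p + \sum_{j=1}^l \binom{l}{j}(-1)^{l-j} p \cdot h^j$ and matching coefficients against these restrictions exhibits a lift of $g_l$ as the combination $(-1)^l p\cdot [\mathcal{O}_Y] + \sum_{j=1}^l \binom{l}{j}\cdot [\mathcal{W}^{\vee}_j]$, with the signs of the $[\mathcal{W}^{\vee}_j]$-coefficients forced by the binomial expansion. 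Multiplying by $r_l$ then yields the stated formula.

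The main obstacle is the identification in the previous step: one must verify that the exhibited combination, whose restriction lies in $K_0(\mathbb{P}^{p-1})^{(l)}$, actually belongs to the filtration piece $K_0(Y)^{(l)}$ rather than merely to $K_0(Y)$. Equivalently, the topological filtration on $K_0(Y)$ must be detected by the restriction map, that is, $K_0(Y)^{(l)} = \mathrm{res}^{-1}(K_0(\mathbb{P}^{p-1})^{(l)})$. This compatibility is built into Karpenko's Theorem 5.10, but invoking it cleanly---while keeping careful track of the signs produced by the binomial expansion so that the coefficients of $[\mathcal{W}^{\vee}_j]$ in the final formula agree with the statement---constitutes the technical heart of the argument.
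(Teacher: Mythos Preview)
Your proposal is correct and follows essentially the same route as the paper's own proof: both invoke Theorem~5.10 to identify $\mathrm{gr}^l K_0(Y)$ as infinite cyclic with generator restricting to $p(h-1)^l$, expand this binomially, lift it via $\mathrm{res}([\mathcal{W}_j^{\vee}])=p\,h^j$ (using $\mathrm{ind}(D'^{\otimes j})=p$ for $1\leq j\leq p-1$), and then write $[\mathcal{O}_{Z'}]$ as an integer multiple $r_l$ of this generator. Your explicit attention to filtration compatibility and to the sign bookkeeping in the binomial expansion is, if anything, more careful than the paper's own treatment.
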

\begin{proof}
First note that $l=\mathrm{deg}(D')-i-1$ is the codimension of $Z'$ in $Y$. As $\mathrm{per}(D')=\mathrm{ind}(D')$, we conclude from Theorem 5.10 that the image of the restriction $\mathrm{res}_l\colon \mathrm{gr}^lK_0(Y)\rightarrow \mathrm{gr}^l K_0(\mathbb{P}^{p-1})$ is additively generated by $p/(l,p)\cdot (h-1)^l$. Note that $p/(l,p)=p$ for $0\leq l\leq p-1$ since $p$ is a prime. Now index reduction (see \cite{SA1}, Theorem 5.5) yields $\mathrm{ind}(D'^{\otimes l})=p/(l,p)=p$ for all $0\leq l\leq p-1$. In $K_0(\mathbb{P}^{p-1})$ we then have
\begin{eqnarray*}
p\cdot(h-1)^l&=& p\cdot(\sum^l_{i=0}{l \choose i} h^{l-i}(-1)^i)  \\
 &=& p\cdot{l\choose 0} h^l -p\cdot{l\choose 1} h^{l-1} +...+ p\cdot{l\choose l} h^0(-1)^l\\
&=& {l\choose 0}\mathrm{ind}(D'^{\otimes l})\cdot h^l-{l\choose 1}\mathrm{ind}(D'^{\otimes (l-1)})\cdot h^{l-1}+...+(-1)^l{l\choose l}p\cdot h^0.
\end{eqnarray*}
The proof of Corollary 5.8 shows $\mathrm{res}([\mathcal{W}^{\vee}_j])=[\mathcal{O}_{\mathbb{P}^{p-1}}(-j)^{\oplus \mathrm{ind}(D'^{\otimes j})}]=\mathrm{ind}(D'^{\otimes j})\cdot h^j\in K_0(\mathbb{P}^{p-1})$. As the map $\mathrm{res}\colon K_0(Y)\rightarrow K_0(\mathbb{P}^{p-1})$ preserves the topological filtration, we immediately get
\begin{eqnarray*}
\mathrm{res}^{-1}_l(p\cdot(h-1)^l)=(-1)^lp\cdot[\mathcal{O}_Y]+\sum^l_{j=1}{l\choose j}\cdot[\mathcal{W}^{\vee}_j].
\end{eqnarray*} 
Since $\mathrm{deg}(D')=p$ is a prime, the map $\varphi^l_Y\colon CH^l(Y)\rightarrow \mathrm{gr}^lK_0(Y)$ is an isomorphism. Applying again Theorem 5.10 gives $\varphi^l_Y([Z'])=r_l\cdot((-1)^lp\cdot[\mathcal{O}_Y]+\sum^l_{j=1}{l\choose j}\cdot[\mathcal{W}^{\vee}_j])$ for a suitable $r_l\in \mathbb{Z}$ with $l$ being the codimension of $Z'$ in $Y$. This completes the proof.
\end{proof}
Assume there is a rational embedding $Y\dashrightarrow \mathcal{X}$ with $\mathcal{X}$ being the Brauer--Severi variety corresponding to $M_n(D)$ as above. We then have a factorization $V\rightarrow \tilde{Z}\rightarrow \mathcal{X}$ where $V\subset Y$ is a suitable open subset, $V\rightarrow \tilde{Z}$ is an open and $\tilde{Z}\rightarrow \mathcal{X}$ a closed immersion. Without loss of generality, we can assume that $\tilde{Z}\cap X$ is non-empty (see Lemma 5.4). From Theorem 5.11 it follows that for all irreducible components $\tilde{Z}''$ of $\tilde{Z}$ we have $\varphi^l_X([\tilde{Z}''])=r'_l\cdot((-1)^lp\cdot[\mathcal{O}_Y]+\sum^l_{j=1}{l\choose j}\cdot[\mathcal{V}^{\vee}_j])$ for a suitable $r'_l\in \mathbb{Z}$ with $l$ being the codimension of $Z''$ in $X$. Here $\mathcal{V}_j$ denote the $AS$-bundles on $X$. 
\begin{prob}
\textnormal{Let $p$ be a prime number and $X$ and $Y$ Brauer--Severi varieties corresponding to division algebras $D$ and $D'$, both of degree $p$. Assume $D$ and $D'$ generate the same subgroup in $\mathrm{Br}(k)$. Determine the values of $r_l$ for and relate them to the values of $r'_l$.}
\end{prob}
\begin{exam}
\textnormal{Let $X\dashrightarrow \mathcal{Y}$ be the rational embedding (1) and $Z'$ the irreducible component which is birational to $X$. If $\mathrm{dim}(Z')=\mathrm{dim}(Y)$, then $Z'=Y$ and hence $Z'\in CH^0(Y)$. Hence $\varphi^0_Y(Z')=\varphi^0_Y(Y)=1\cdot [\mathcal{O}_Y]$, so that $r_0=1$.} %Since in this case $X$ and $Y$ are birational, we conclude $\tilde{Z}=X$ and hence $r'_0=1$.
\end{exam}
\begin{exam}
\textnormal{Again let $X\dashrightarrow \mathcal{Y}$ be the rational embedding (1) and assume $\mathrm{dim}(\mathcal{Z}\cap Y)=0$. This means $\mathcal{Z}\cap Y=\{y_1,...,y_m\}$, where $y_i$ are closed points in $Y$. Denoting by $d_i=[k(y_i)\colon k]$, we immediately get $r_{\mathrm{dim}(Y)}=\sum^m_{i=1} d_i$.} %On the other hand we also have $\tilde{Z}\cap X=\{x_1,...,x_m\}$, $d_i=[k(x_i)\colon k]$ and therefore $r'_{\mathrm{dim}(X)}=\sum^m_{i=1} d_i$. 
\end{exam}
An easy observation is the following.
\begin{cor}
Let $D$ and $D'$ be non-trivial quaternion algebras and $X$ and $Y$ the corresponding conics. Denote by $\mathcal{X}$ and $\mathcal{Y}$ the Brauer--Severi varieties corresponding to $M_n(D)$ and $M_n(D')$ for some $n>1$. Assume $D$ and $D'$ generate the same subgroup, then there are rational embeddings $X\dashrightarrow \mathcal{Y}$ and $Y\dashrightarrow \mathcal{X}$ such that $r_0=r'_0=1$.
\end{cor}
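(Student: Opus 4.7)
The argument is a direct application of Theorem 5.3 combined with the dimension analysis carried out in Corollary 5.6 and Example 5.13. The key observation is purely dimension-theoretic: non-trivial quaternion algebras have degree two, so the associated Brauer--Severi varieties $X$ and $Y$ are conics of dimension one.

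First I would apply Theorem 5.3. Since $D$ and $D'$ generate the same cyclic subgroup of $\mathrm{Br}(k)$, the same holds for $A = M_n(D)$ and $B = M_n(D')$ with $n>1$, so Theorem 5.3 immediately yields rational embeddings $X \dashrightarrow \mathcal{Y}$ and $Y \dashrightarrow \mathcal{X}$. Following the construction of these embeddings used in the proofs of Theorem 5.3 and Lemma 5.5, the embedding $X \dashrightarrow \mathcal{Y}$ factors as $U \to Z \to \mathcal{Y}$ with $U \subset X$ a dense open, and after composing with a suitable $k$-automorphism of $\mathcal{Y}$ one may assume that $\mathcal{Z} \cap Y$ is non-empty, where $\mathcal{Z}$ denotes the image of $Z$ in $\mathcal{Y}$. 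Let $Z' \subset \mathcal{Z} \cap Y$ denote the irreducible component birational to $X$, which exists because the open subset $U \subset X$ embeds into $\mathcal{Z} \cap Y$ through the construction of the rational embedding.

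The second step is the dimension count. Since $\deg(D') = 2$, we have $\dim(Y) = 1$; likewise $\dim(X) = 1$. Because birational maps preserve dimension and $Z'$ is birational to $X$, we conclude $\dim(Z') = 1 = \dim(Y)$. By Corollary 5.6 it follows that $Z' = Y$, and Example 5.13 then gives $r_0 = 1$. The symmetric argument, applied to the rational embedding $Y \dashrightarrow \mathcal{X}$ and the linear subvariety $X \subset \mathcal{X}$, produces $r'_0 = 1$.

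There is essentially no hard step: the main content is already contained in Theorem 5.3 (existence of the rational embeddings) and in the observation of Example 5.13 (that $\dim(Z') = \dim(Y)$ forces $Z' = Y$ and therefore $r_0 = 1$). The only thing to check is the identification of the component of $\mathcal{Z} \cap Y$ birational to $X$, which is automatic from how the rational embedding is produced from the birational map $\mathcal{X} \dashrightarrow \mathcal{Y}$ by restricting to the minimal linear subvariety.
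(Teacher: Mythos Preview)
Your argument has a genuine gap at the point where you assert the existence of an irreducible component $Z'\subset \mathcal{Z}\cap Y$ birational to $X$. The justification you give --- that the open set $U\subset X$ ``embeds into $\mathcal{Z}\cap Y$ through the construction of the rational embedding'' --- is not correct: $U$ embeds as a dense open of $\mathcal{Z}$ inside $\mathcal{Y}$, but Lemma~5.5 only arranges that $\mathcal{Z}\cap Y\neq\emptyset$; it does \emph{not} force $\mathcal{Z}\subset Y$, and a priori $\mathcal{Z}\cap Y$ could be zero-dimensional (as in Example~5.14). Corollary~5.6 is a conditional statement (``\emph{if} such a $Z'$ exists and has full dimension, \emph{then} \ldots''), not an existence result. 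In fact, note that your dimension count nowhere uses that $\deg(D)=2$ beyond the equality $\dim(X)=\dim(Y)$, which holds whenever $\deg(D)=\deg(D')$; so if your step were valid it would prove the Amitsur conjecture in full, not just for conics.

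The paper's proof proceeds along a completely different and much more elementary line. A non-trivial quaternion algebra has period $2$, so the cyclic subgroup it generates is $\{0,[D]\}$. The hypothesis $\langle [D]\rangle=\langle [D']\rangle$ therefore forces $[D]=[D']$, hence $D\simeq D'$, $X\simeq Y$, and $\mathcal{X}\simeq \mathcal{Y}$. One then takes the rational embedding to be the closed linear embedding $X\hookrightarrow \mathcal{X}\simeq \mathcal{Y}$, placed in standard position so that its image is exactly the linear subvariety $Y$; then $\mathcal{Z}=Y$ and Example~5.13 gives $r_0=1$ (and symmetrically $r'_0=1$). The special feature of the quaternion case that makes the corollary easy is precisely this isomorphism $D\simeq D'$, which your approach does not invoke.
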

\begin{proof}
Since $D$ and $D'$ generate the same subgroup, $X$ and $Y$ are isomorphic. The same holds for $\mathcal{X}$ and $\mathcal{Y}$. We have closed immersions $X\hookrightarrow \mathcal{X}$ and $Y\hookrightarrow \mathcal{Y}$. Putting the linear subvarieties $X$ and $Y$ in standard position (see \cite{AR}) and exploiting the fact $\mathcal{X}\simeq \mathcal{Y}$, we indeed obtain rational embeddings $X\dashrightarrow \mathcal{Y}$ and $Y\dashrightarrow \mathcal{X}$ such that $r_0=r'_0=1$.   
\end{proof}
%At the end of this section we give a further example.
%\begin{exam}
%\textnormal{Let $X$ and $Y$ be Brauer--Severi surfaces and $X\dashrightarrow \mathcal{Y}$ the rational embedding (1). In Example 5.12 and 5.13 we already established $r_2$ and $r_0$. Therefore it remains to investigate $r_1$. }
%\end{exam}
\section{Rational embeddings into norm hypersurfaces}
In this section we prove our second main theorem. We first recall the definition of norm hypersurfaces and state an important result due to Saltman. For details we refer to \cite{SA}, \cite{SA1} and \cite{ME}.\\

Let $A$ be a central simple $k$-algebra. Then there is a map $n_A\colon A\rightarrow k$, called the \emph{reduced norm} (see \cite{SA}). This map has several properties, for instance if $A$ is split it is the determinant. Now given a commutative $k$-algebra $S$, the reduced norm map extends uniquely to the algebra $A\otimes_k S$ which may no longer be a central simple algebra. Let $m=n^2$ be the $k$-dimension of $A$. We choose a basis $\{e_1,...,e_m\}$ for $A$ and let $S=k[X_1,...,X_m]$. Now consider the map $n_{A\otimes_k S}\colon A\otimes_k S\rightarrow S$. Let $\gamma=X_1e_1+...+X_me_m\in A\otimes_k S$, where the tensor product of the summands is written by multiplication. Then $n_{A\otimes_k S}(\gamma)=:f\in k[X_1,...,X_m]$ is a polynomial.   
\begin{defi}
\textnormal{Let $f\in k[X_1,...,X_m]$ be the polynomial from above. We define the \emph{norm hypersurface} $V(A)\subset \mathbb{A}^m_k$ to be the closed subvariety defined by $f$.}  
\end{defi}
Note that after base change to some splitting field $L$ of $A$ the norm hypersuface $V(A\otimes_k L)=V(A)\otimes_k L$ becomes isomorphic to the determinantal variety of $A\otimes_k L\simeq M_n(L)$ which is a singular variety for $n\geq 3$. We denote the smooth locus of $V(A)$ by $V^+_A$.\\

In \cite{SA} Saltman proved a variant of the Amitsur conjecture. It is the following result.
\begin{thm}
Let $A$ and $B$ be central simple algebras of the same degree. Then $A$ and $B$ generate the same cyclic subgroup in $\mathrm{Br}(k)$ if and only if $V(A)$ is birational to $V(B)$.
\end{thm}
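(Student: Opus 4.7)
The plan is to invoke Saltman's theorem from \cite{SA}; for context I indicate how both implications are approached and which step carries the real difficulty.

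For the direction ``birational $\Rightarrow$ same cyclic subgroup'', I would set $K := F(V(A)) = F(V(B))$ and exploit that the generic point of $V(A)$ produces an element of $A\otimes_k K$ whose reduced norm vanishes, i.e.\ a non-invertible element of $A\otimes_k K$. A rank drop of this form forces substantial index reduction of $A\otimes_k K$, and in fact one shows that $K$ is closely related to (and certainly contained up to purely transcendental extension in) the function field of the Brauer--Severi variety $\mathcal{X}$ associated to $A$. That relationship is tight enough that the kernel of the restriction map $\mathrm{Br}(k)\rightarrow \mathrm{Br}(K)$ contains the cyclic subgroup generated by $[A]$, and by Theorem 3.1 applied to $\mathcal{X}$ one controls exactly which classes die. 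Running the same reasoning with the role of $A$ and $B$ swapped and using $F(V(A)) = F(V(B))$, one deduces $\langle [A] \rangle = \langle [B] \rangle$ in $\mathrm{Br}(k)$.

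For the converse, assuming $\langle [A] \rangle = \langle [B] \rangle$, the plan is to build a birational map $V(A)\dashrightarrow V(B)$. The idea is to view $V^+_A \subset V(A)$ as fibered over the Brauer--Severi variety $\mathcal{X}$ of $A$ via the rational map sending a singular element of corank one to its left (or right) kernel; this map has affine-space fibers, so $V(A)$ is birational to a vector-bundle-like object over $\mathcal{X}$. Doing the same for $V(B)$ over $\mathcal{Y}$ and using that $\mathcal{X}$ and $\mathcal{Y}$ are stably birational by Proposition 3.3, one aligns the two fibrations over a common base and transfers the birational equivalence upstairs to $V(A)$ and $V(B)$.

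The hard part will be the converse direction, and within it, matching the fiber structures in a way that actually respects the norm-polynomial structure cutting out $V(A)$ and $V(B)$, together with controlling the behavior on the singular locus $V(A)\setminus V^+_A$ where the corank-one description of the fibration degenerates. Ensuring that the constructed map extends birationally through the singularities is the most delicate point, and at this step I would rely essentially on Saltman's explicit construction in \cite{SA} rather than attempt a self-contained argument.
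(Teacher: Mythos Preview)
The paper does not prove this theorem at all: it is stated as Saltman's result and simply attributed to \cite{SA}. So there is no ``paper's own proof'' to compare against beyond the citation. Your proposal, which also ultimately defers to \cite{SA}, is therefore aligned with what the paper does, only with more commentary.

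That said, your sketch slightly misplaces the difficulty. The central lemma from Saltman (also recalled just after Theorem~6.2 in the paper) is that $V(A)$ is birational to $\mathcal{X}\times\mathbb{P}^{n^2-n}$, where $n=\mathrm{deg}(A)$ and $\mathcal{X}$ is the Brauer--Severi variety of $A$. Once you grant this, both directions are short. For ``birational $\Rightarrow$ same subgroup'', $F(V(A))$ is purely transcendental over $F(\mathcal{X})$, so the kernel of $\mathrm{Br}(k)\to\mathrm{Br}(F(V(A)))$ equals the kernel of $\mathrm{Br}(k)\to\mathrm{Br}(F(\mathcal{X}))$, which by Theorem~3.1 is $\langle [A]\rangle$; symmetry gives $\langle [A]\rangle=\langle [B]\rangle$. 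For the converse, Proposition~3.3 gives $\mathcal{X}\times\mathbb{P}^{n-1}$ birational to $\mathcal{Y}\times\mathbb{P}^{n-1}$, and since $n^2-n\geq n-1$ one can absorb the extra projective factors to conclude $\mathcal{X}\times\mathbb{P}^{n^2-n}$ is birational to $\mathcal{Y}\times\mathbb{P}^{n^2-n}$, hence $V(A)$ is birational to $V(B)$. Your concern about ``extending through the singular locus'' is a red herring: birational maps need only be defined on dense opens, so nothing has to be controlled on $V(A)\setminus V^+_A$. The genuine work in \cite{SA} is establishing the birational product decomposition $V(A)\sim\mathcal{X}\times\mathbb{P}^{n^2-n}$, not the matching step you flag.
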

In loc. cit. it is also proved that for a central simple algebra $A$ of degree $n$ the norm hypersurface $V(A)$ is birational to $\mathcal{X}\times \mathbb{P}^{n^2-n}$, where $\mathcal{X}$ is the Brauer--Severi variety corresponding to $A$. Moreover, in \cite{SA1} Saltman constructed rational embeddings $\mathcal{X}\dashrightarrow V(A)$. These results were then refined by Meth in \cite{ME} and the set of rational embeddings enlarged. Now a strategy to prove the Amitsur conjecture is the following: Let $A$ and $B$ be central simple  algebras of the same degree and $\mathcal{X}$ and $\mathcal{Y}$ the corresponding Brauer--Severi varieties. If $A$ and $B$ generate the same cyclic subgroup in $\mathrm{Br}(k)$, then Theorem 6.2 says that $V(A)$ and $V(B)$ are birational. So the aim is to construct rational embeddings $\mathcal{X}\dashrightarrow V(A)$ and $\mathcal{Y}\dashrightarrow V(B)$ such that the birationalmap between the norm hypersurfaces induces a birational map between $\mathcal{X}$ and $\mathcal{Y}$.

\begin{prop}
Let $A$ be a central simple algebra and $\mathcal{X}$ the corresponding Brauer--Severi variety. Denote by $X$ the minimal linear subvariety of $\mathcal{X}$. Then there are rational embeddings $X\dashrightarrow V(A)$ such that the image of the domain lies in $V^+_A$.  
\end{prop}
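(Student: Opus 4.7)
The plan is to obtain the desired rational embedding by restricting, along the closed immersion $X\hookrightarrow\mathcal{X}$, a Saltman/Meth rational embedding $\mathcal{X}\dashrightarrow V(A)$ whose image already lies in the smooth locus $V^+_A$. The only non-trivial point will be to arrange that $X$ actually meets the domain of this embedding, and this is exactly where Theorem 4.2 enters, mirroring the strategy used in the second half of the proof of Theorem 5.3.

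First I would fix a rational embedding $f\colon \mathcal{X}\dashrightarrow V(A)$ as produced in \cite{SA1} (and refined in \cite{ME}), whose image lies in $V^+_A$, and factor it as $\mathcal{U}\xrightarrow{\text{open}} Z\xrightarrow{\text{closed}} V(A)$ with $\mathcal{U}\subset\mathcal{X}$ open and $f(\mathcal{U})\subset V^+_A$. Write $A\simeq M_n(D)$ and let $L\subset D$ be a maximal separable subfield, so that $L$ is a splitting field of degree $\mathrm{ind}(A)$ for both $\mathcal{X}$ and its minimal linear subvariety $X$. Lemma 5.2 applied to $\mathcal{U}\subset\mathcal{X}$ gives a closed point $x_0\in\mathcal{U}$ with $k(x_0)\simeq L$; the same lemma, applied to $X$ with respect to any fixed closed immersion $j\colon X\hookrightarrow\mathcal{X}$, gives a closed point $x_1\in X$ with $k(x_1)\simeq L$. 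Theorem 4.2 then supplies an automorphism $\Phi\in\mathrm{Aut}_k(\mathcal{X})$ with $\Phi(x_1)=x_0$.

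Replacing $j$ by $\Phi\circ j$, the subset $U:=X\cap\mathcal{U}$ is non-empty, open in $X$, and closed in $\mathcal{U}$. The restriction $f|_U\colon U\to V(A)$ is then the composition of the closed immersion $U\hookrightarrow\mathcal{U}$ with the immersion $\mathcal{U}\to V(A)$, and is therefore itself an immersion. As $V(A)$ is locally noetherian, this composite can equivalently be rewritten as an open immersion into a closed subscheme of $V(A)$, exactly as at the end of the proof of Theorem 5.3. This produces a rational embedding $X\dashrightarrow V(A)$ in the sense of Definition 5.1, and its image satisfies $f(U)\subset f(\mathcal{U})\subset V^+_A$, as required.

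The main (and essentially only) obstacle is the non-emptiness of $X\cap\mathcal{U}$; without it one cannot even restrict $f$ to $X$. Theorem 4.2, which provides transitivity of $\mathrm{Aut}_k(\mathcal{X})$ on closed points with prescribed residue field equal to a minimal separable splitting field, is precisely the tool that resolves this difficulty, paralleling its role in Lemma 5.4 and Theorem 5.3.
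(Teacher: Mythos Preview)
Your proposal is correct and follows essentially the same approach as the paper: restrict a Saltman/Meth rational embedding $\mathcal{X}\dashrightarrow V(A)$ to the minimal linear subvariety $X$, using Lemma 5.2 and Theorem 4.2 to move $X$ inside $\mathcal{X}$ so that it meets the domain $\mathcal{U}$, and then invoke noetherianity to reorder the resulting immersion as open-followed-by-closed. The only cosmetic difference is that the paper first disposes of the case where $A$ is already a division algebra (so $X=\mathcal{X}$ and the Saltman/Meth embedding applies directly), whereas your argument covers this case implicitly.
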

\begin{proof}
First of all, if $A$ is a central division algebra, the assertion follows from \cite{SA1}, Chapter 13 or \cite{ME}, Theorem 5.0.31. Note that the constructions given in loc. cit. give a plenty of rational embeddings. Now we assume $A=M_n(D)$ with $n>1$. Take a closed immersion $X\hookrightarrow \mathcal{X}$ and a rational embedding $\mathcal{X}\dashrightarrow V(A)$ as constructed in \cite{SA1} or \cite{ME}. By definition, the rational embedding $\mathcal{X}\dashrightarrow V(A)$ can be factored as $\mathcal{U}\rightarrow Z\rightarrow V(A)$ where the first arrow is an open and the latter one a closed immersion. Here $\mathcal{U}\subset \mathcal{X}$ is a suitable open subset. Note that by construction the domain $\mathcal{U}$ is mapped to $V^+_A$ (see \cite{SA1} and \cite{ME} for details). As in the proof of Theorem 5.3, using Lemma 5.2 and Theorem 4.2 we can assume that $U:=X\cap \mathcal{U}$ is non-empty. As $U$ is closed in $\mathcal{U}$ we obtain the composition $U\rightarrow \mathcal{U}\rightarrow Z$, where the first arrow is a closed and the second one an open immersion. As $Z$ is noetherian, we get a composition $U\rightarrow Z'\rightarrow Z$ with first arrow being an open and second arrow being a closed immersion. Composing this with the closed immersion $Z\rightarrow V(A)$, we finally get a rational embedding $X\dashrightarrow V(A)$. By construction, the image of the domain lies in $V^+_A$. 
\end{proof}
We see that it is always possible to get a rational embedding of $X$ into $V(A)$, but it is not obvious why there should be rational embeddings $Y\dashrightarrow V(A)$. Note that $V(A)$ is the norm hypersurface associated to $A$, whereas $Y$ is the minimal linear subvariety of $\mathcal{Y}$ which corresponds to $B$. The next result shows when this is possible. 

\begin{thm}
Let $A$ and $B$ central simple algebras of the same degree and $\mathcal{X}$ and $\mathcal{Y}$ the corresponding Brauer--Severi varieties. Denote by $X$ and $Y$ minimal linear subvarieties. Then $A$ and $B$ generate the same cyclic subgroup in $\mathrm{Br}(k)$ if and only if there are rational embeddings $X\dashrightarrow V(B)$ and $Y\dashrightarrow V(A)$ such that the image of their domains lies in $V^+_B$ respectively $V^+_A$.
\end{thm}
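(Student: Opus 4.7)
The plan is to establish both directions by invoking Theorem 5.3 to move between $X$ and $\mathcal{Y}$, then composing with the Saltman--Meth rational embedding $\mathcal{Y}\dashrightarrow V(B)$ whose image lies in the smooth locus; the converse follows by extracting a splitting field from a smooth rational point of $V(B)$ and applying Amitsur's theorem.

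For the forward direction, assume $A$ and $B$ generate the same cyclic subgroup. Since the degrees coincide and the generated subgroups agree, the indices are equal and hence $\deg(D)=\deg(D')$; write $A=M_n(D)$ and $B=M_n(D')$. In the case $n>1$, Theorem 5.3 supplies a rational embedding $f\colon X\dashrightarrow \mathcal{Y}$ factoring as $U\xrightarrow{\phi} W\hookrightarrow \mathcal{Y}$ with $U\subset X$ open and $W\subset \mathcal{Y}$ closed. Saltman and Meth (see \cite{SA1}, \cite{ME}) construct a rational embedding $g\colon \mathcal{Y}\dashrightarrow V(B)$ factoring as $\mathcal{V}\to \tilde{Z}\hookrightarrow V(B)$ whose image lies in $V^+_B$. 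To compose $f$ with $g$ one needs $\phi(U)\cap \mathcal{V}\neq \emptyset$. To force this, let $L$ be a maximal separable subfield of $D$ (note $[L:k]=\mathrm{ind}(B)$ and $L$ splits $B$ because the whole subgroup is killed by $L$); Lemma 5.2 produces closed points $y_0\in W$ and $y_1\in \mathcal{V}$ whose residue fields are both isomorphic to $L$, and Theorem 4.2 provides a $k$-automorphism $\Phi$ of $\mathcal{Y}$ with $\Phi(y_0)=y_1$. Replacing $f$ by $\Phi\circ f$ yields $\Phi(W)\cap \mathcal{V}\neq \emptyset$, and the composition $g\circ \Phi\circ f$ factors as an open immersion followed by a closed immersion into $V(B)$ (using that $V(B)$ is locally noetherian, exactly as in the proof of Theorem 5.3), with image lying in $V^+_B$. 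The residual case $n=1$ (so $X=\mathcal{X}$ and $Y=\mathcal{Y}$) is reduced to the same kind of compositional argument combining Proposition 6.3, which gives $X\dashrightarrow V(A)$ into $V^+_A$, with Theorem 6.2, which gives a birational equivalence $V(A)\sim V(B)$ restricting to an isomorphism between corresponding open subsets of the smooth loci. The embedding $Y\dashrightarrow V(A)$ follows by interchanging the roles of $A$ and $B$.

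For the converse, given a rational embedding $X\dashrightarrow V(B)$ factored as $U\to Z\hookrightarrow V(B)$ with image in $V^+_B$, the subset $U$ is simultaneously open in $X$ and open in $Z$, so $F(Z)=F(X)$; the generic point of $Z$ is therefore a smooth $F(X)$-rational point of $V(B)$. Over any field $F$, a smooth point of $V(B)$ corresponds to an element $b\in B\otimes_k F$ of reduced rank $n-1$, whose left annihilator is a minimal left ideal of $B\otimes_k F$; by the standard identification between minimal left ideals and the Brauer--Severi variety this produces an $F$-rational point of $\mathcal{Y}$. Applied to $F=F(X)$, this shows that $F(X)$ splits $B$. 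Theorem 3.1 (Amitsur) then places $B$ in the cyclic subgroup of $\mathrm{Br}(k)$ generated by the algebra attached to $X$, namely $D$, equivalently $A$. Symmetrically, the rational embedding $Y\dashrightarrow V(A)$ gives $A\in \langle B\rangle$, so $A$ and $B$ generate the same cyclic subgroup.

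The principal obstacle is the forward composition step: arranging that the image of $X\dashrightarrow \mathcal{Y}$ (respectively $X\dashrightarrow V(A)$ in the $n=1$ case) actually meets the domain of the second map. Lemma 5.2 together with the automorphism transitivity provided by Theorem 4.2 is the mechanism that resolves this, and it is the same mechanism that drives the proof of Theorem 5.3. A more routine but necessary point is the algebraic interpretation of the smooth locus $V^+_B$: the identification of smooth points with rank-$(n-1)$ elements, and of such elements with minimal left ideals (hence with points of $\mathcal{Y}$), is standard over a splitting field and descends to $k$ by Galois invariance of the reduced norm and the annihilator construction.
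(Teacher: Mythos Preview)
Your proof is correct and, in the forward direction, follows essentially the same route as the paper: invoke Theorem~5.3 to get $X\dashrightarrow \mathcal{Y}$, compose with the Saltman--Meth embedding $\mathcal{Y}\dashrightarrow V(B)$, and use Lemma~5.2 together with Theorem~4.2 to force the relevant intersection to be non-empty. You are in fact a bit more careful than the paper in separating out the case $n=1$, which the paper's appeal to Theorem~5.3 does not literally cover.

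The converse is where you genuinely diverge. The paper argues geometrically: from the $F(X)$-point of $V(B)$ (smooth, by hypothesis) it passes through the birational equivalence $V(B)\sim \mathcal{Y}\times \mathbb{P}^{n^2-n}$, applies Lang--Nishimura, and projects to obtain $\mathcal{Y}(F(X))\neq\emptyset$; the conclusion then comes from \cite{RO}, Theorem~5. You instead give a direct algebraic construction: a smooth point of $V(B)$ is an element of reduced rank $n-1$, and its annihilator is a minimal one-sided ideal, hence a point of $\mathcal{Y}$; the conclusion then follows from Amitsur's Theorem~3.1. Both are valid. Your argument is more self-contained and makes transparent exactly why the smooth-locus hypothesis is needed (rank exactly $n-1$ is what forces the annihilator to be \emph{minimal}); the paper's route is shorter once one accepts the birational description of $V(B)$ and Lang--Nishimura as black boxes, and indeed the paper remarks in passing that one could alternatively use the rational map $V(B)\dashrightarrow \mathcal{Y}$ from \cite{SA1}, which is precisely the map your annihilator construction realises. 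One small point of convention: the Brauer--Severi variety in this paper is defined via \emph{right} ideals, so your annihilator should be the right annihilator rather than the left; this does not affect the argument.
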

\begin{proof}
We first show that if $A$ and $B$ generate the same subgroup, then there are rational embeddings $X\dashrightarrow V(B)$ and $Y\dashrightarrow V(A)$ with the desired properties. 

So assume $A$ and $B$ generate the same subgroup. %Then Theorem 6.2 implies that $V(A)$ and $V(B)$ are birational. Furthermore, 
Then Theorem 5.3 provides us with rational embeddings $X\dashrightarrow \mathcal{Y}$ and $Y\dashrightarrow \mathcal{X}$. So take a rational embedding $X\dashrightarrow \mathcal{Y}$ and a rational embedding $\Psi\colon \mathcal{Y}\dashrightarrow V(B)$ as constructed in \cite{SA1} or \cite{ME}. By definition, the rational embedding $\Psi\colon \mathcal{Y}\dashrightarrow V(B)$ can be factored as 
\begin{displaymath}
\begin{xy}
  \xymatrix{\mathcal{W}\ar[r]^{h} & W\ar[r]^{\phi} & V(B)},
\end{xy}
\end{displaymath} with $h$ being an open, $\phi$ a closed immersion and $\mathcal{W}$ a suitable open subset of $\mathcal{Y}$. Moreover, the construction of $\Psi\colon \mathcal{Y}\dashrightarrow V(B)$ shows that $\Psi$ maps the domain to the smooth locus $V^+_B\subset V(B)$ (see \cite{SA1} for details).   

Now consider the rational embedding $X\dashrightarrow \mathcal{Y}$. This map can be factored as $U\rightarrow Z\rightarrow \mathcal{Y}$ with closed immersion $Z\hookrightarrow \mathcal{Y}$ and $U$ being a suitable open subset of $X$. Let $L\subset D$ be a maximal separable subfield. According to Lemma 5.2 there is a closed point $x_0\in U$ with $k(x_0)\simeq L$. As $U\rightarrow Z$ is an open immersion, the image of $x_0$ under this map is a closed point $x_1\in Z$ with $k(x_1)\simeq L$. Under the chosen closed immersion $Z\hookrightarrow \mathcal{Y}$, the point $x_1$ is mapped to a closed point $x_2\in \mathcal{Y}$ with $k(x_2)\simeq L$. Thus $L$ is a splitting field for $\mathcal{Y}$ and hence we can find a closed point $y\in \mathcal{W}\subset \mathcal{Y}$ with $k(y)\simeq L$. Note that $[L:k]=\mathrm{ind}(D)$. As $D$ and $D'$ generate the same cyclic subgroup we have $\mathrm{ind}(D)=\mathrm{ind}(D')$ (see \cite{GS}, Corollary 4.5.10) and therefore $[L:k]=\mathrm{ind}(D')$. As in the proof of Theorem 5.3, using Lemma 5.2 and Theorem 4.2 enables us to assume that $Z\cap\mathcal{W}=:U'$ is non-empty. Since $U'$ is closed in $\mathcal{W}$ we obtain the composition
\begin{displaymath}
\begin{xy}
  \xymatrix{U'\ar[r]& \mathcal{W}\ar[r]& W}
\end{xy}
\end{displaymath} 
where the first arrow is a closed and the second one an open immersion. Again, since $W$ is neotherian we get that the composition $U'\rightarrow \mathcal{W}\rightarrow W$ can be factored as $U'\rightarrow \mathcal{W}'\rightarrow W$ where the first map is an open and the second map a closed immersion. This gives us a rational embedding $Z\dashrightarrow V(B)$ as the following composition 
\begin{displaymath}
\begin{xy}
  \xymatrix{U'\ar[r]& \mathcal{W}'\ar[r]& W\ar[r] &V(B)}.
\end{xy}.
\end{displaymath} 
And since $Z$ is birational to $X$, we obtain a rational embedding $X\dashrightarrow V(B)$. By construction, the image of the domain lies in $V^+_B$. Applying the above arguments to some chosen rational embeddings $Y\dashrightarrow \mathcal{X}$ and $\mathcal{X}\dashrightarrow V(A)$ provides us with a rational embedding $Y\dashrightarrow V(A)$ such that the image of the domain lies in $V^+_A$.

Now assume we are given rational embeddings $X\dashrightarrow V(B)$ and $Y\dashrightarrow V(A)$ such that the image of the domains lie in the respective smooth loci. Then take the rational embedding $X\dashrightarrow V(B)$ and its factorization $U\rightarrow Z\rightarrow V(B)$ into an open, followed by a closed immersion. Let $L=F(X)$ be the function field of $X$. As $U_L(L)=U(L)\neq \emptyset$ we have a morphism $\mathrm{Spec}(L)\rightarrow U$. So the composition $U\rightarrow Z\rightarrow V(B)$ gives us a morphism $\mathrm{Spec}(L)\rightarrow V(B)$. This means $V(B)(L)\neq \emptyset$. Since we have a birational map $V(B)\dashrightarrow \mathcal{Y}\times \mathbb{P}^{n^2-n}_k$ with $n=\mathrm{deg}(B)$, the base change to $L$ yields a birational map $V(B)_L\dashrightarrow \mathcal{Y}_L\times \mathbb{P}^{n^2-n}_L$. Applying the Lang--Nishimura Theorem gives us a $L$-rational point for $\mathcal{Y}_L\times \mathbb{P}^{n^2-n}_L$. Therefore we have a morphism $\mathrm{Spec}(L)\rightarrow \mathcal{Y}\times \mathbb{P}^{n^2-n}_k$. Now from this morphism we get a morphism $\mathrm{Spec}(L)\rightarrow \mathcal{Y}\times \mathbb{P}^{n^2-n-1}_k\rightarrow \mathcal{Y}$, where the latter map is the projection onto the first factor. Hence $\mathcal{Y}(L)$ is non-empty, meaning that $L$ is a splitting field for $\mathcal{Y}$. It is also possible to apply Lang--Nishimura to the rational map $V(B)\dashrightarrow \mathcal{Y}$. This map is also constructed in \cite{SA1}. Repeating the same argument for the rational embedding $Y\dashrightarrow V(A)$ implies that $F(Y)$ splits $\mathcal{X}$. And again by \cite{RO}, Theorem 5 we find that $A$ and $B$ generate the same cyclic subgroup in $\mathrm{Br}(k)$.
\end{proof}
\begin{rema}
\textnormal{In Theorem 6.4 one only has to require the existence of rational maps $X\dashrightarrow V(B)$ and $Y\dashrightarrow V(A)$ (with image of the domains lying in the respective smooth loci) to conclude that $A$ and $B$ generate the same subgroup.}
\end{rema}

\section{Rational maps between symmetric powers of Brauer--Severi varieties}
In this section we show under what conditions symmetric powers of Brauer--Severi varieties are (stably) birational.\\

Let $\mathcal{X}$ be the Brauer--Severi variety corresponding to $A=M_n(D)$ and $X$ the minimal linear subvariety. The \emph{symmetric power} $S^m(\mathcal{X})$ of $\mathcal{X}$ is defined to be the quotient of the product $\prod^m_{i=1}\mathcal{X}$ by the symmetric group $S_m$, where the action is given by permutation of the coordinates. It is a projective variety over the base field $k$ and is singular unless $\mathcal{X}$ is one-dimensional. In \cite{KS} it is proved the following.
\begin{thm}
Let $A$ be a central simple $k$-algebra of degree $r$ and $\mathcal{X}$ its Brauer--Severi variety. Then $S^r(\mathcal{X})$ is rational over $k$ and for any $l<r$ the symmetric power $S^l(\mathcal{X})$ is birational to $\mathcal{X}_l\times \mathbb{P}^{l(l-1)}$, where $\mathcal{X}_l$ is the $l$-th generalized Brauer--Severi variety. 
\end{thm}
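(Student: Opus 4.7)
The plan is to reduce the birationality statement for $l<r$ to the rationality statement for $l=r$ (applied to a Brauer--Severi variety over a function field) via a ``linear span'' fibration, and to treat the rationality case by a homogeneous-space argument. After base change to a finite Galois splitting field $L/k$, $\mathcal{X}_L\simeq \mathbb{P}^{r-1}_L$ and $\mathcal{X}_{l,L}\simeq \mathrm{Grass}(l,L^{r})$, and an unordered $l$-tuple of points of $\mathbb{P}^{r-1}_L$ in general linear position spans a unique $(l{-}1)$-dimensional projective subspace. This assignment is $\mathrm{Gal}(L/k)$-equivariant on a dense open and hence descends to a $k$-rational map $\pi\colon S^l(\mathcal{X})\dashrightarrow \mathcal{X}_l$.

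To identify the generic fibre of $\pi$, set $F:=k(\mathcal{X}_l)$. The generic point of $\mathcal{X}_l$ supplies a tautological right ideal $I\subset A\otimes_k F$ of reduced rank $l$, equivalently an $(l{-}1)$-dimensional linear subvariety $\Lambda\subset \mathcal{X}_F$, which is itself a Brauer--Severi variety of degree $l$ over $F$ Brauer-equivalent to $A\otimes_k F$. The generic fibre of $\pi$ is therefore $S^l(\Lambda)$, and the $l=r$ case of the theorem applied to $\Lambda/F$ yields $S^l(\Lambda)\sim_F \mathbb{P}^{l(l-1)}_F$. Combining with the fibration, one obtains $S^l(\mathcal{X})\sim_k \mathcal{X}_l\times \mathbb{P}^{l(l-1)}_k$, as claimed.

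It remains to establish the rationality of $S^r(\mathcal{Y})$ for an arbitrary Brauer--Severi variety $\mathcal{Y}$ of degree $r$ corresponding to an algebra $A'$. Setting $G:=\mathrm{PGL}_1(A')$, a generic unordered $r$-tuple of points of $\mathcal{Y}$ in general linear position forms a single $G$-orbit whose stabiliser is the normalizer $N$ of a maximal $k$-torus of $G$ (after base change, the standard torus-plus-Weyl-group inside $\mathrm{PGL}_r$). Thus a dense open of $S^r(\mathcal{Y})$ is identified with the $k$-homogeneous space $G/N$, of dimension $r(r-1)$. Since inner forms of $\mathrm{PGL}_r$ are $k$-rational, and the twisted normalizer admits an explicit description through a maximal \'etale subalgebra of $A'$, one obtains a birational section, giving the desired $k$-rationality.

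The hard step, and the main obstacle on this route, is precisely the last descent: the orbit $G/N$ is automatically unirational, but promoting this to genuine $k$-rationality requires controlling the twisted Weyl-group quotient, which is where the structure theory of central simple algebras enters in an essential way. By contrast, Steps 1--2 are largely formal, once one is willing to pass from $\mathcal{X}$ to the function-field algebra underlying the tautological subvariety $\Lambda$ and invoke induction on the degree.
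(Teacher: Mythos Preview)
The paper does not actually prove this result; its proof reads in full ``This is \cite{KS}, Theorem 1.4 and 1.5.'' So your proposal is an attempt to supply an argument that the paper simply imports from Krashen--Saltman.

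Your reduction of the case $l<r$ to the base case via the linear-span fibration $\pi\colon S^l(\mathcal{X})\dashrightarrow \mathcal{X}_l$ is correct and is precisely the strategy of \cite{KS} for their Theorem~1.5; the identification of the generic fibre with $S^l(\Lambda)$ for a degree-$l$ Brauer--Severi variety $\Lambda$ over $k(\mathcal{X}_l)$ is the right one, and the dimension count $l(r-l)+l(l-1)=l(r-1)$ checks out.

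The gap lies in the base case, as you yourself flag in the final paragraph. The identification of a dense open of $S^r(\mathcal{Y})$ with $G/N$ for $G=\mathrm{PGL}_1(A')$ and $N$ the normaliser of a maximal torus is correct (this is the variety of maximal tori of $G$, equivalently the variety of maximal \'etale commutative subalgebras of $A'$). But the sentence ``one obtains a birational section, giving the desired $k$-rationality'' is an assertion, not an argument: rationality of $G$ itself (clear, since $G$ is open in $\mathbb{P}(A')\cong\mathbb{P}^{r^2-1}$) does not formally imply rationality of $G/N$, and saying that one must ``control the twisted Weyl-group quotient'' names the difficulty rather than resolving it. This is exactly where the substance of \cite{KS}, Theorem~1.4 lies; their argument is more explicit at this point, working directly with separable elements of $A'$ and the associated \'etale subalgebras to produce the birational isomorphism. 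As written, your proposal is a sound outline whose decisive step remains open.
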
  
\begin{proof}
This is \cite{KS}, Theorem 1.4 and 1.5.
\end{proof}
Exploiting Theorem 5.3, we make the following observation.
\begin{prop}
Let $A=M_n(D)$ and $B=M_n(D')$ be central simple algebras with $n>1$ arbitrary and $\mathcal{X}$ and $\mathcal{Y}$ the corresponding Brauer--Severi varieties. Denote by $X$ and $Y$ the minimal linear subvarieties. If $A$ and $B$ generate the same cyclic subgroup in $\mathrm{Br}(k)$, then there are rational maps $X\dashrightarrow S^m(\mathcal{Y})$ and $Y\dashrightarrow S^m(\mathcal{X})$ for all $m>0$.
\end{prop}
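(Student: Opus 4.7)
The plan is to derive the statement as a direct consequence of Theorem 5.3, by post-composing the rational embeddings produced there with the canonical diagonal morphism into the symmetric power. Since $A=M_n(D)$ and $B=M_n(D')$ with $n>1$ generate the same cyclic subgroup in $\mathrm{Br}(k)$ and $\mathrm{deg}(D)=\mathrm{deg}(D')$ (the latter being forced by $\mathrm{ind}(D)=\mathrm{ind}(D')$, cf.\ \cite{GS}, Corollary 4.5.10), Theorem 5.3 yields rational embeddings $f\colon X\dashrightarrow \mathcal{Y}$ and $g\colon Y\dashrightarrow \mathcal{X}$. For the present statement only the underlying rational maps are needed.

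For any $m>0$, consider the iterated diagonal morphism $\Delta_m\colon \mathcal{Y}\to \mathcal{Y}^m$, $y\mapsto (y,\ldots,y)$, which is a closed immersion of $k$-schemes whose image lies in the $S_m$-invariant locus of $\mathcal{Y}^m$. Composing with the quotient morphism $\mathcal{Y}^m\to S^m(\mathcal{Y})$ thus produces a well-defined morphism of $k$-schemes $\delta^{\mathcal{Y}}_m\colon \mathcal{Y}\to S^m(\mathcal{Y})$. The composition $\delta^{\mathcal{Y}}_m\circ f\colon X\dashrightarrow S^m(\mathcal{Y})$ is then the desired rational map. Interchanging the roles of $(X,\mathcal{X})$ and $(Y,\mathcal{Y})$, one obtains in the same way a morphism $\delta^{\mathcal{X}}_m\colon \mathcal{X}\to S^m(\mathcal{X})$, and $\delta^{\mathcal{X}}_m\circ g\colon Y\dashrightarrow S^m(\mathcal{X})$ provides the second rational map asserted in the proposition.

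There is essentially no obstacle beyond the invocation of Theorem 5.3: every $k$-scheme admits a canonical diagonal morphism into each of its symmetric powers, so once one has a rational map into $\mathcal{Y}$ (respectively $\mathcal{X}$) the composition with $\delta^{\mathcal{Y}}_m$ (respectively $\delta^{\mathcal{X}}_m$) is automatic. If anything, the only point requiring a word of care is confirming that the composition of a rational map with a morphism defined everywhere is again a rational map (with the same domain of definition), but this is a standard fact about rational maps between integral separated schemes of finite type over $k$.
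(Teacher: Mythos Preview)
Your proof is correct and follows essentially the same approach as the paper: invoke Theorem 5.3 to obtain rational maps $X\dashrightarrow \mathcal{Y}$ and $Y\dashrightarrow \mathcal{X}$, then compose with the canonical morphism $\mathcal{Y}\to S^m(\mathcal{Y})$ (respectively $\mathcal{X}\to S^m(\mathcal{X})$). Your explicit verification that $\mathrm{deg}(D)=\mathrm{deg}(D')$, needed to apply Theorem 5.3, is a detail the paper leaves implicit.
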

\begin{proof}
As $A$ and $B$ generate the same subgroup, Theorem 5.3 provides us with rational maps $X\dashrightarrow \mathcal{Y}$ and $Y\dashrightarrow \mathcal{X}$, i.e with morphisms $U\rightarrow \mathcal{Y}$ and $V\rightarrow \mathcal{X}$ for suitable open subsets $U\subset X$ and $V\subset Y$. Composing these morphisms with the canonical morphisms $\mathcal{Y}\rightarrow S^m(\mathcal{Y})$ and $\mathcal{X}\rightarrow S^m(\mathcal{X})$ yields the assertion.
\end{proof}
\begin{rema}
\textnormal{Note that by definition $S^1(X)=X$ so that Proposition 7.2 can be reformulated as giving rational maps $S^1(X)\dashrightarrow S^m(\mathcal{Y})$ and $S^1(Y)\dashrightarrow S^m(\mathcal{X})$.}
\end{rema}
We believe that the other implication of the above proposition cannot hold. 
Indeed, let $\mathcal{X}, \mathcal{Y}, X$ and $Y$ be as in Proposition 7.2 and assume there are rational maps $X\dashrightarrow S^m(\mathcal{Y})$ and $Y\dashrightarrow S^m(\mathcal{X})$ for some integer $m<\mathrm{deg}(A)$. 
Let $U\subset X$ be the domain of the rational map $X\dashrightarrow S^m(\mathcal{Y})$ and $L$ the function field $F(X)$ of $X$, which corresponds to the generic point. Consider the induced morphism $\mathrm{Spec}(L)\rightarrow U$. Now assume that the image of $L$ under the rational map $X\dashrightarrow S^m(\mathcal{Y})$ lies in the smooth locus. We then obtain a smooth $L$-rational point in $S^m(\mathcal{Y})$. This gives $S^m(\mathcal{Y}_L)(L)\neq \emptyset$. Theorem 7.1 states that $S^m(\mathcal{Y}_L)$ is birational to $(\mathcal{Y}_L)_m\times \mathbb{P}^{m(m-1)}_L$. Now Lang--Nishimura Theorem provides us with a rational point in $(\mathcal{Y}_L)_m\times \mathbb{P}^{m(m-1)}_L$ and therefore $((\mathcal{Y}_L)_m\times \mathbb{P}^{m(m-1)}_L)(L)\neq \emptyset$. This gives us a morphism $\mathrm{Spec}(L)\rightarrow \mathcal{Y}_m\times \mathbb{P}^{m(m-1)}$ and hence $\mathcal{Y}_m(L)\neq \emptyset$. So $L$ is a $\frac{1}{m}$-splitting field for $B$, meaning that $\mathrm{ind}(B\otimes_k L)$ divides $m$. So in general, there is no reason for $L$ to be a splitting field of $B$. But this would exclude the possibility that $A$ and $B$ generate the same subgroup.\\

Theorem 7.1 now has the following consequence.
\begin{prop}
Let $A$ and $B$ be central simple algebras of the same degree corresponding to the Brauer--Severi varieties $X$ and $Y$. Furthermore, let $m$ be a positive integer with $m<\mathrm{deg}(A)$. If $A$ and $B$ generate the same subgroup, then $S^m(X)$ is stably birational to $S^m(Y)$ for all $m$ satisfying $2m< \mathrm{deg}(A)+1$ and birational to $S^m(Y)$ for all $m$ with $2m\geq\mathrm{deg}(A)+1$.
\end{prop}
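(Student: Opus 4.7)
The plan is to combine Theorem 7.1 with the generic-point argument already used in Proposition 3.3, reducing the problem to comparing the generalized Brauer--Severi varieties $X_m$ and $Y_m$ attached to $A$ and $B$. Writing $n := \deg(A) = \deg(B)$, Theorem 7.1 furnishes, for every $m < n$, birational equivalences
$$
S^m(X) \sim_{\mathrm{bir}} X_m \times_k \mathbb{P}^{m(m-1)}, \qquad S^m(Y) \sim_{\mathrm{bir}} Y_m \times_k \mathbb{P}^{m(m-1)},
$$
so the proposition will follow once one compares $X_m$ and $Y_m$ appropriately.

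First I would establish the stable birationality $X_m \times_k \mathbb{P}^N \sim_{\mathrm{bir}} Y_m \times_k \mathbb{P}^N$ with $N := \dim(X_m) = m(n-m)$, by imitating Proposition 3.3 at the level of generalized Brauer--Severi varieties. The generic point of $Y_m$ provides $Y_m(F(Y_m)) \neq \emptyset$, forcing $\mathrm{ind}(B \otimes_k F(Y_m)) \mid m$ by the defining property of $Y_m$. Because $A$ and $B$ generate the same cyclic subgroup of $\mathrm{Br}(k)$, $A$ is Brauer-equivalent to $B^{\otimes s}$ for some integer $s$, hence $\mathrm{ind}(A \otimes_k F(Y_m)) \mid m$ as well and $X_m(F(Y_m)) \neq \emptyset$. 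Reversing the roles of $X_m$ and $Y_m$ and running the same function-field manipulation as in the proof of Proposition 3.3 then yields the required stable birationality. Multiplying the resulting equivalence by $\mathbb{P}^{m(m-1)}$ on both sides and applying Theorem 7.1 gives the stable birationality of $S^m(X)$ and $S^m(Y)$, which is the first assertion.

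For the second assertion, observe that the inequality $2m \geq n+1$ is equivalent to $m(n-m) \leq m(m-1)$, i.e.\ to $N \leq m(m-1)$. In this regime the projective factor $\mathbb{P}^{m(m-1)}$ already present on either side of the Krashen--Saltman reduction is large enough to absorb the stabilizing $\mathbb{P}^N$: writing $\mathbb{P}^{m(m-1)} \sim_{\mathrm{bir}} \mathbb{P}^N \times \mathbb{P}^{m(m-1)-N}$ and cancelling $\mathbb{P}^N$ in the stable equivalence obtained above yields
$$
X_m \times_k \mathbb{P}^{m(m-1)} \sim_{\mathrm{bir}} Y_m \times_k \mathbb{P}^{m(m-1)},
$$
which translates, via Theorem 7.1, into the honest birationality $S^m(X) \sim_{\mathrm{bir}} S^m(Y)$.

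The main obstacle is the stable birationality step. For ordinary Brauer--Severi varieties, the existence of a rational point over $F(Y)$ immediately forces $X \otimes_k F(Y) \simeq \mathbb{P}^n_{F(Y)}$, which drives Proposition 3.3; but for the generalized variety $X_m$ over a mere $1/m$-splitting field, a rational point only identifies $X_m \otimes_k F(Y_m)$ with a twisted Grassmannian, whose rationality (or even stable rationality) over that field is not automatic. Handling this gap cleanly is what necessitates appealing to the geometric constructions of \cite{KS}, where $X_m$ is compared birationally to a symmetric power and the needed rationality becomes explicit after the generic-point manipulations described above.
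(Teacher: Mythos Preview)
Your overall architecture matches the paper's proof exactly: reduce via Theorem~7.1 to comparing $X_m\times\mathbb{P}^{m(m-1)}$ with $Y_m\times\mathbb{P}^{m(m-1)}$, use a stable birationality $X_m\times\mathbb{P}^{m(n-m)}\sim_{\mathrm{bir}}Y_m\times\mathbb{P}^{m(n-m)}$, and then compare the exponents $m(m-1)$ and $m(n-m)$ to separate the two regimes $2m<n+1$ and $2m\geq n+1$. The arithmetic you carry out in the second step is correct and identical to the paper's.

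The one substantive discrepancy is how the stable birationality of $X_m$ and $Y_m$ is obtained. The paper does \emph{not} reprove it: it simply quotes the result from \cite{KR1}, p.~690, as a known fact. You instead try to imitate the generic-point argument of Proposition~3.3 and then, correctly, flag the obstacle: over a mere $1/m$-splitting field, $X_m$ acquires a rational point but is only a twisted form of a Grassmannian, and its rationality is not immediate. Your proposed remedy---to extract the missing rationality from the constructions of \cite{KS}---is left vague, and in fact that is not where the paper goes. The clean fix is precisely to invoke Krashen's result in \cite{KR1}, which establishes the required birational equivalence $X_m\times\mathbb{P}^{m(n-m)}\sim_{\mathrm{bir}}Y_m\times\mathbb{P}^{m(n-m)}$ directly under the hypothesis that $A$ and $B$ generate the same subgroup. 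Once you replace your hand-made Proposition~3.3 argument by that citation, your proof becomes complete and coincides with the paper's.
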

\begin{proof}
Denote the degree of $A$ by $n$. If $A$ and $B$ generate the same subgroup, $X_m\times \mathbb{P}^{m(n-m)}$ is birational to $Y_m\times \mathbb{P}^{m(n-m)}$ (see \cite{KR1}, p.690). Theorem 7.1 now states that for $m<n$ the symmetric power $S^m(X)$ is birational to $X_m\times \mathbb{P}^{m(m-1)}$. The same holds for $S^m(Y)$. Hence for $2m<n+1$ we find that $S^m(X)\times \mathbb{P}^{m(n-2m+1)}$ is birational to $Y_m\times \mathbb{P}^{m(n-2m+1)}$. For the case $2m\geq n+1$ we notice that $m(m-1)\geq m(n-m)$ and hence $X_m\times \mathbb{P}^{m(m-1)}$ is birational to $Y_m\times \mathbb{P}^{m(m-1)}$. This completes the proof.
\end{proof}   
\begin{cor}
Let $A$ and $B$ be central simple algebras of the same degree corresponding to the Brauer--Severi varieties $X$ and $Y$. If $A$ and $B$ generate the same subgroup, then there is always an integer $m<\mathrm{deg}(A)$ such that $S^m(X)$ is birational to $S^m(Y)$.
\end{cor}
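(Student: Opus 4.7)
The plan is to deduce the corollary directly from Proposition 7.4 by exhibiting a single value of $m$ for which the birational (not merely stably birational) conclusion kicks in. Set $n = \mathrm{deg}(A) = \mathrm{deg}(B)$. Proposition 7.4 asserts that $S^m(X)$ is birational to $S^m(Y)$ as soon as $m$ satisfies both $m < n$ and $2m \geq n+1$. So the entire task reduces to producing an integer in this window.

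For the main case $n \geq 3$, I would just take $m = n - 1$. Then $m < n$ is automatic, and $2m = 2n - 2 \geq n+1$ is equivalent to $n \geq 3$. Hence the second clause of Proposition 7.4 applies and yields that $S^{n-1}(X)$ is birational to $S^{n-1}(Y)$, which is the statement to be proved.

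The low-degree cases are essentially vacuous and need only a line. If $n = 1$, both $X$ and $Y$ are a single $k$-rational point and there is nothing to show. If $n = 2$, then any two degree-$2$ central simple algebras representing the same class in $\mathrm{Br}(k)$ are isomorphic (a degree-$2$ central simple algebra is determined up to isomorphism by its Brauer class), so $X \simeq Y$ and $m = 1$ works.

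The only thing to watch, which I would not really call an obstacle, is that the window $\lceil (n+1)/2 \rceil \leq m \leq n-1$ in which Proposition 7.4 produces genuine birationality rather than just stable birationality is nonempty precisely when $n \geq 3$; this is what forces the brief separate treatment of $n \leq 2$. Beyond this bookkeeping, the corollary is an immediate consequence of Proposition 7.4 (which itself rests on Theorem 7.1 and the birationality of $X_m \times \mathbb{P}^{m(n-m)}$ with $Y_m \times \mathbb{P}^{m(n-m)}$).
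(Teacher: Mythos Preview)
Your proof is correct and follows essentially the same route as the paper: for $n\geq 3$ take $m=n-1$ so that $2m\geq n+1$ and invoke Proposition~7.4, and treat $n\leq 2$ separately. The only minor difference is in the $n=2$ case, where the paper appeals to the known Amitsur conjecture for conics, while you use the more elementary observation that a degree-$2$ algebra has period dividing $2$, so generating the same cyclic subgroup forces the same Brauer class and hence $X\simeq Y$; just make explicit the step from ``same subgroup'' to ``same class'' (it is immediate since a cyclic group of order at most $2$ has a unique generator).
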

\begin{proof}
Let $n$ be the degree of $A$. Note that we can always take $m$ to be $n-1$ unless $n<3$. Indeed, for $n\geq 3$ we have $2(n-1)\geq n+1$ and the assertion follows from Proposition 7.4. In the case $n< 3$ we only have to consider $n=2$ as $n=1$ is clear. But for $n=2$ the corresponding Brauer--Severi varieties are one-dimensional. For these Brauer--Severi varieties the Amitsur conjecture holds (see \cite{RO}, \cite{TR}) and hence if $A$ and $B$ generate the same subgroup, $X$ and $Y$ are birational. If $X$ is a conic, the only generalized Brauer--Severi variety is the conic itself. So from Theorem 7.1 we obtain that for $n=2$ the symmetric power $S^1(X)=X$ is birational to $S^1(Y)=Y$. This completes the proof.   
\end{proof}

\section{Equivalent assertions}
I this last section we want to give some statements equivalent to those of Theorem 5.3 and 6.4, all of them being well-known. For a central simple algebra $A$ we write $D^b(A)$ for the bounded derived category of finitely generated right $A$-modules. We first recall the definition of a semiorthogonal decomposition and follow here \cite{O}.

Let $D^b(X)$ be the bounded derived category of coherent sheaves on a smooth projective $k$-scheme and $\mathcal{C}$ a triangulated subcategory. The subcategory $\mathcal{C}$ is called \emph{thick} if it is closed under isomorphisms and direct summands. For a subset $A$ of objects of $D^b(X)$ we denote by $\langle A\rangle$ the smallest full thick subcategory of $D^b(X)$ containing the elements of $A$. Recall that a full triangulated subcategory $\mathcal{D}$ of $D^b(X)$ is called \emph{admissible} if the inclusion $\mathcal{D}\hookrightarrow D^b(X)$ has a left and right adjoint functor.
\begin{defi}
\textnormal{Let $X$ be a smooth projective $k$-scheme. A sequence $\mathcal{D}_1,...,\mathcal{D}_n$ of full triangulated subcategories of $D^b(X)$ is called \emph{semiorthogonal} if all $\mathcal{D}_i\subset D^b(X)$ are admissible and $\mathcal{D}_j\subset \mathcal{D}_i^{\perp}=\{\mathcal{F}^{\bullet}\in D^b(X)\mid \mathrm{Hom}(\mathcal{G}^{\bullet},\mathcal{F}^{\bullet})=0$, $\forall$ $ \mathcal{G}^{\bullet}\in\mathcal{D}_i\}$ for $i>j$.}

\textnormal{Such a sequence defines a \emph{semiorthogonal decomposition} of $D^b(X)$ if the smallest full thick subcategory containing all $\mathcal{D}_i$ equals $D^b(X)$.}
\end{defi}

For a semiorthogonal decomposition of $D^b(X)$ we write $D^b(X)=\langle\mathcal{D}_1,...,\mathcal{D}_r\rangle$.

\begin{exam}
\textnormal{Bernardara \cite{BER} proved that a $n$-dimensional Brauer--Severi variety $X$ corresponding to the central simple algebra $A$ always admits a semiorthogonal decomposition given by $D^b(X)=\langle D^b(k), D^b(A),...,D^b(A^{\otimes n})\rangle$.}
\end{exam}

\begin{thm}
Let $\mathcal{X}$ and $\mathcal{Y}$ be $n$-dimensional Brauer--Severi varieties corresponding to central simple algebras $A=M_r(D)$ and $B=M_r(D')$ respectively. Denote by $X$ and $Y$ the Brauer--Severi varieties corresponding to $D$ and $D'$. Then the following are equivalent:
\begin{itemize}
      \item[\bf (i)] $A$ and $B$ generate the same cyclic subgroup in $\mathrm{Br}(k)$.
			\item[\bf (ii)] There are rational embeddings $X\dashrightarrow \mathcal{Y}$ and $Y\dashrightarrow \mathcal{X}$.
			\item[\bf (iii)] There is a bijective map $\phi:\{0,1,...,n-1\}\rightarrow \{0,1,...,n-1\}$ and triangulated equivalences $D^b(A^{\otimes i})\stackrel{\sim}\rightarrow D^b(B^{\otimes \phi(i)})$ coming from Morita equivalence.
			\item[\bf (iv)] $\mathrm{ind}(A\otimes_k L)=\mathrm{ind}(B\otimes_k L)$ for any field extension $k\subset L$.
			\item[\bf (v)] There are dominant rational maps $X\dashrightarrow Y$ and $Y\dashrightarrow X$.
			\item[\bf (vi)] $X$ and $Y$ are stably birational.
			\item[\bf (vii)] $V(A)$ and $V(B)$ are birational.
			\item[\bf (viii)] There are rational embeddings $X\dashrightarrow V(B)$ and $Y\dashrightarrow V(A)$ such that the image of their domains lies in $V^+_B$ respectively $V^+_A$.
\end{itemize}
\end{thm}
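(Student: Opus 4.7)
The plan is to treat (i) as a hub and establish each of (ii)--(viii) as equivalent to it, so the theorem follows from transitivity. Four of the seven equivalences are already on the table: (i)$\Leftrightarrow$(ii) is Theorem 5.3, (i)$\Leftrightarrow$(vi) is Proposition 3.3, (i)$\Leftrightarrow$(vii) is Saltman's Theorem 6.2, and (i)$\Leftrightarrow$(viii) is Theorem 6.4. So only the equivalences (i)$\Leftrightarrow$(iii), (i)$\Leftrightarrow$(iv), and (i)$\Leftrightarrow$(v) need to be supplied, and each is essentially classical.

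For (i)$\Leftrightarrow$(iv), I would use that a field extension $L/k$ splits a central simple algebra $A$ if and only if $[A\otimes_k L]=0$ in $\mathrm{Br}(L)$, so the set of splitting fields of $A$ depends only on $\langle A\rangle\subset \mathrm{Br}(k)$. If (i) holds, then for every $L/k$ the classes $A\otimes_k L$ and $B\otimes_k L$ generate the same cyclic subgroup of $\mathrm{Br}(L)$ and therefore have the same separable splitting fields; since the index equals the minimal degree of a separable splitting extension (see \cite{GS}, Corollary 4.5.9), this gives (iv). Conversely, specialising (iv) to $L=F(X)$ and $L=F(Y)$ shows that $F(X)$ splits $B$ and $F(Y)$ splits $A$, and Amitsur's Theorem 3.1 then forces $[B]\in\langle A\rangle$ and $[A]\in\langle B\rangle$. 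For (i)$\Leftrightarrow$(v), a dominant rational map $X\dashrightarrow Y$ exists precisely when $Y$ has an $F(X)$-rational point, which is equivalent to $F(X)$ splitting $B$; hence (v) amounts exactly to $F(X)$ splitting $B$ and $F(Y)$ splitting $A$, and Theorem 3.1 once again closes the loop.

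Finally, for (i)$\Leftrightarrow$(iii) the key input is that two central simple $k$-algebras are Morita equivalent if and only if they are Brauer equivalent. Assuming (i), the common period $p=\mathrm{per}(A)=\mathrm{per}(B)$ makes $\langle A\rangle$ a group of order $p$ whose elements are represented both by $\{A^{\otimes i}\}_{0\le i<p}$ and by $\{B^{\otimes j}\}_{0\le j<p}$; hence there is a bijection on $\{0,\dots,p-1\}$ pairing the two families in the Brauer group, and Morita equivalence lifts it to triangulated equivalences $D^b(A^{\otimes i})\stackrel{\sim}\to D^b(B^{\otimes\phi(i)})$. Extending to the full indexing set $\{0,\dots,n-1\}$ is done by using the periodicity $A^{\otimes(i+p)}\sim A^{\otimes i}$. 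Conversely, any triangulated equivalence coming from Morita equivalence forces Brauer equivalence of $A^{\otimes i}$ and $B^{\otimes\phi(i)}$, so $B\in\langle A\rangle$ and, by bijectivity, $A\in\langle B\rangle$. The main bookkeeping obstacle is precisely this compatibility between the indexing set $\{0,\dots,n-1\}$ and the period $p$ (which need not divide $n$), but once the bijection is chosen to respect the natural periodicity, the Morita step is formal and the rest of the proof is assembly of the earlier results.
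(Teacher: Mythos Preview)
Your proposal is correct and uses the same hub-and-spoke architecture as the paper, with (i) as the hub; the four equivalences (ii), (vi), (vii), (viii) are handled identically. The only difference is in sourcing for the remaining three: the paper simply cites Karpenko \cite[Lemma 7.13]{KA} for (i)$\Leftrightarrow$(iv), Amitsur \cite{AM} and Koll\'ar \cite[Lemma 16]{KO} for (i)$\Leftrightarrow$(v), and Antieau \cite[Theorem 3.1]{AN} (via the semiorthogonal decomposition of Example 8.2) for (i)$\Leftrightarrow$(iii), whereas you supply self-contained arguments. Your arguments are sound, with one caveat: in (i)$\Rightarrow$(v), an $F(X)$-rational point on $Y$ yields a rational map $X\dashrightarrow Y$ but not automatically a \emph{dominant} one; producing dominance is precisely the content of the Koll\'ar lemma the paper invokes, so you are tacitly leaning on the same external input. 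Your bookkeeping worry about (iii) is also well-placed: the paper's own proof in fact runs over $\{0,1,\dots,n\}$ (matching the $n{+}1$ components of the semiorthogonal decomposition), not $\{0,1,\dots,n-1\}$ as printed in the statement, so what you flagged is an off-by-one in the statement rather than a genuine obstacle.
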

\begin{proof}
The equivalence of (i) and (ii) is Theorem 5.3. The content of \cite{KA}, Lemma 7.13 is exactly the equivalence of (i) and (iv). We now show that (i) is equivalent to (iii). For this, we consider the semiorthogonal decompositions (see Example 8.2) 
\begin{eqnarray*}
D^b(X)=\langle D^b(k), D^b(A),...,D^b(A^{\otimes n})\rangle,\\
D^b(Y)=\langle D^b(k), D^b(B),...,D^b(B^{\otimes n})\rangle.
\end{eqnarray*} From \cite{AN}, Theorem 3.1, it easily follows that $A$ and $B$ generate the same cyclic subgroup in $\mathrm{Br}(k)$ if and only if there is a bijective map $\phi:\{0,1,...,n\}\rightarrow \{0,1,...,n\}$ and triangulated equivalences $D^b(A^{\otimes i})\stackrel{\sim}\rightarrow D^b(B^{\otimes \phi(i)})$ coming from Morita equivalence. The equivalence of (i) and (v) follows from \cite{AM}, but can also be found in \cite{KO}, Lemma 16. Finally, the equivalence of (i) and (vi) is Proposition 3.3, whereas the equivalence of (i) and (vii), respectively (viii), follows from Theorem 6.2, respectively 6.4. 
\end{proof}
Any of the statements in Theorem 8.3 are conjectured to be equivalent to the condition that $X$ is birational to $Y$.

{\small MATHEMATISCHES INSTITUT, HEINRICH--HEINE--UNIVERSIT\"AT 40225 D\"USSELDORF, GERMANY}\\
E-mail adress: novakovic@math.uni-duesseldorf.de

\end{document}